\newtheorem{thm}{Theorem}[section]
\newtheorem{lem}[thm]{Lemma}
\newtheorem{exa}[thm]{Example}
\newtheorem{rem}[thm]{Remark}
\theoremstyle{definition}
\newcommand{\scr}[1]{\mathscr #1}
\definecolor{wco}{rgb}{0.5,0.2,0.3}
\numberwithin{equation}{section} \theoremstyle{remark}
\newcommand{\ua}{\uparrow}
\title{{\bf  Bismut Formula for Intrinsic/Lions Derivatives of Distribution Dependent SDEs with Singular Coefficients}\footnote{X. Huang is supported by NSFC (No.11801406), Y. Song is supported by NSFC (No.11971227, 11790272) and F.-Y. Wang is supported by NSFC (No.11771326, 11831014, 11921001)} }
\author{
{\bf   Xing Huang $^{a)}$,  Yulin Song $^{b)}$, Feng-Yu Wang $^{a), c)}$  }\\
\footnotesize{ a)Center for Applied Mathematics, Tianjin
University, Tianjin, 300072, China}\\
 \footnotesize{ b)Department of Mathematics,
Nanjing University, Nanjing, 210093, China}\\
\footnotesize{ c)Department of Mathematics,
Swansea University, Bay Campus, Swansea, SA1 8EN, United Kingdom}\\
\footnotesize{xinghuang@tju.edu.cn, \ ylsong@nju.edu.cn,\  wangfy@tju.edu.cn}}
\begin{document}
\allowdisplaybreaks
\def\R{\mathbb R}  \def\ff{\frac} \def\ss{\sqrt} \def\B{\mathbf
B} \def\W{\mathbb W}
\def\N{\mathbb N} \def\kk{\kappa} \def\m{{\bf m}}
\def\ee{\varepsilon}\def\ddd{D^*}
\def\dd{\delta} \def\DD{\Delta} \def\vv{\varepsilon} \def\rr{\rho}
\def\<{\langle} \def\>{\rangle} \def\GG{\Gamma} \def\gg{\gamma}
  \def\nn{\nabla} \def\pp{\partial} \def\E{\mathbb E}
\def\d{\text{\rm{d}}} \def\bb{\beta} \def\aa{\alpha} \def\D{\scr D}
  \def\si{\sigma} \def\ess{\text{\rm{ess}}}
\def\beg{\begin} \def\beq{\begin{equation}}  \def\F{\scr F}
\def\Ric{\text{\rm{Ric}}} \def\Hess{\text{\rm{Hess}}}
\def\e{\text{\rm{e}}} \def\ua{\underline a} \def\OO{\Omega}  \def\oo{\omega}
 \def\tt{\tilde} \def\Ric{\text{\rm{Ric}}}
\def\cut{\text{\rm{cut}}} \def\P{\mathbb P} \def\ifn{I_n(f^{\bigotimes n})}
\def\C{\scr C}      \def\aaa{\mathbf{r}}     \def\r{r}
\def\gap{\text{\rm{gap}}} \def\prr{\pi_{{\bf m},\varrho}}  \def\r{\mathbf r}
\def\Z{\mathbb Z} \def\vrr{\varrho} \def\ll{\lambda}
\def\L{\scr L}\def\Tt{\tt} \def\TT{\tt}\def\II{\mathbb I}
\def\i{{\rm in}}\def\Sect{{\rm Sect}}  \def\H{\mathbb H}
\def\M{\scr M}\def\Q{\mathbb Q} \def\texto{\text{o}} \def\LL{\Lambda}
\def\Rank{{\rm Rank}} \def\B{\scr B} \def\i{{\rm i}} \def\HR{\hat{\R}^d}
\def\to{\rightarrow}\def\l{\ell}\def\iint{\int}
\def\EE{\scr E}\def\Cut{{\rm Cut}}
\def\A{\scr A} \def\Lip{{\rm Lip}}
\def\BB{\scr B}\def\Ent{{\rm Ent}}\def\L{\scr L}
\def\R{\mathbb R}  \def\ff{\frac} \def\ss{\sqrt} \def\B{\mathbf
B}
\def\N{\mathbb N} \def\kk{\kappa} \def\m{{\bf m}}
\def\dd{\delta} \def\DD{\Delta} \def\vv{\varepsilon} \def\rr{\rho}
\def\<{\langle} \def\>{\rangle} \def\GG{\Gamma} \def\gg{\gamma}
  \def\nn{\nabla} \def\pp{\partial} \def\E{\mathbb E}
\def\d{\text{\rm{d}}} \def\bb{\beta} \def\aa{\alpha} \def\D{\scr D}
  \def\si{\sigma} \def\ess{\text{\rm{ess}}}
\def\beg{\begin} \def\beq{\begin{equation}}  \def\F{\scr F}
\def\Ric{\text{\rm{Ric}}} \def\Hess{\text{\rm{Hess}}}
\def\e{\text{\rm{e}}} \def\ua{\underline a} \def\OO{\Omega}  \def\oo{\omega}
 \def\tt{\tilde} \def\Ric{\text{\rm{Ric}}}
\def\cut{\text{\rm{cut}}} \def\P{\mathbb P} \def\ifn{I_n(f^{\bigotimes n})}
\def\C{\scr C}      \def\aaa{\mathbf{r}}     \def\r{r}
\def\gap{\text{\rm{gap}}} \def\prr{\pi_{{\bf m},\varrho}}  \def\r{\mathbf r}
\def\Z{\mathbb Z} \def\vrr{\varrho} \def\ll{\lambda}
\def\L{\scr L}\def\Tt{\tt} \def\TT{\tt}\def\II{\mathbb I}
\def\i{{\rm in}}\def\Sect{{\rm Sect}}  \def\H{\mathbb H}
\def\M{\scr M}\def\Q{\mathbb Q} \def\texto{\text{o}} \def\LL{\Lambda}
\def\Rank{{\rm Rank}} \def\B{\scr B} \def\i{{\rm i}} \def\HR{\hat{\R}^d}
\def\to{\rightarrow}\def\l{\ell}
\def\8{\infty}\def\I{1}\def\U{\scr U}
\maketitle

\begin{abstract} By using distribution dependent Zvonkin's transforms and Malliavin calculus,  the Bismut type formula is derived for the intrinisc/Lions derivatives of distribution dependent SDEs with singular drifts, which generalizes the corresponding results derived for classical SDEs and regular distribution dependent SDEs.

\end{abstract} \noindent
 AMS subject Classification:\  60H1075, 60G44.   \\
\noindent
 Keywords: Distribution dependent SDEs, intrinsic/Lions derivative, Zvonkin's transform,  Bismut formula.
 \vskip 2cm

\section{Introduction}

Due to wide  applications in the study of nonlinear PDEs and particle systems,  distribution dependent  stochastic differential equations (DDSDEs for short),
also called McKean-Vlasov or mean-field SDEs, have been intensively investigated,    see for instance  \cite{BT, CD, Carmona,BR1,BR2,HRW, BRTV, CN, Dawson, 20HRW, HW, HW20a, K, MV, SZ} among many other references.

 To characterize the regularity of DDSDEs, Bismut formula and derivative estimates have been presented for the distribution of solutions with respect to initial data, see  for instance  \cite{FYW1,B,CM,RW,BRW,S2020}. See also \cite{CN} for  the   study of   decoupled SDEs where the distribution parameter is fixed as the law of the associated DDSDE, and the resulting regularity estimates  apply to the DDSDEs as well (see Remark 2.2 below).

In this paper, we aim to establish Bismut formula for the Lions derivative of singular DDSDEs, such that existing results derived in more regular situations are extended.
This type  formula was first found  by Bismut \cite{Bismut84} in 1984
using Malliavin calculus for diffusion semigroups on manifolds, then reproved by Elworthy-Li \cite{El94} in 1974 using martingale arguments.
Since then the formula has been widely developed and  applied for SDEs/SPDEs driven by Gaussian or L\'evy noises.  Recently, Bismut formula was established in \cite{XXZZ} for SDEs with singular drifts by using Zvonkin's transform \cite{AZ}, which is a powerful tool in regularizing singular SDEs. In this paper, we aim to extend this result for singular DDSDEs.

Let $\scr P$ be the set of all probability measures on $\R^d$. Consider the following distribution-dependent SDE on $\R^d$:
\beq\label{E1}
\d X_t= (B_t+b_t)(X_t, \L_{X_t})\d t +\si_t(X_t)\d W_t,
\end{equation}
where $W_t$ is the $d$-dimensional Brownian motion on a complete filtration probability space $(\OO,\F,\P;\{\F_t\}_{t\ge 0})$, $\L_{X_t}$ is the law of $X_t$ under $\P$, and
$$
B,b: \R_+\times\R^d\times \scr P\to \R^d,\ \ \si: \R_+\times\R^d\to \R^d\otimes\R^d
$$
are measurable.
We will consider the SDE \eqref{E1} with initial distributions in the class
$$
\scr P_2 := \big\{\mu\in \scr P: \mu(|\cdot|^2)<\infty\big\}.
$$
It is well known that $\scr P_2$ is a Polish space under the Wasserstein distance
$$
\W_2(\mu,\nu):= \inf_{\pi\in \C(\mu,\nu)} \bigg(\int_{\R^d\times\R^d} |x-y|^2 \pi(\d x,\d y)\bigg)^{\ff 1 {2}},\ \ \mu,\nu\in \scr P_{2},
$$
where $\C(\mu,\nu)$ is the set of all couplings of $\mu$ and $\nu$. In the following we will assume that $B$ is regular and $b$ is singular in the space variable.

We call \eqref{E1}  strong (resp. weak)  well-posed  for distributions in $\scr P_2,$ i.e. for any initial value $X_0\in L^2(\OO\to\R^d,\F_0;\P)$ (resp. initial distribution $\mu\in \scr P_2$), if
\eqref{E1} has a unique strong (resp. weak) solution with $X_{\cdot}\in C([0,\infty);\scr P_2).$ When $\eqref{E1}$ is both strong and weak well-posed (note that unlike in the classical setting, the strong well-posedness does not imply the weak one), we call it well-posed.
In this case, for any $\mu\in \scr P_2$, denote $(P_t^*\mu=\L_{X_t})_{t\ge 0}$ for the solution $(X_t)_{t\ge 0}$ with initial distribution $\L_{X_0}=\mu\in \scr P_2$. For any $f\in \B_b(\R^d)$, the class of bounded measurable functions on $\R^d$,
we aim to establish Bismut formulas for $P_tf(\mu)$ in $\mu\in \scr P_2$, where
$$P_tf(\mu):= (P_t^*\mu)(f):=\int_{\R^d}f(y) (P_t^*\mu)(\d y),\ \ t>0.$$
To this end, we first recall the  intrinsic/Lions derivatives  for real functions on $\scr P_2$.

\beg{defn} Let   $f: \scr P_2\to \R$.\beg{enumerate}
 \item[$(1)$]  If for any  $\phi\in L^2(\R^d\to\R^d;\mu)$,
  $$D_\phi^I f(\mu):= \lim_{\vv\downarrow 0} \ff{f(\mu\circ({\rm Id}+\vv \phi)^{-1})-f(\mu)}\vv\in\R$$ exists, and is a bounded linear functional in  $\phi$, we call   $f$ intrinsic differentiable  at   $\mu$. In this case, there exists a unique   $D^If(\mu)\in L^2(\R^d\to\R^d;\mu)$  such that
  $$\<D^If(\mu), \phi\>_{L^2(\mu)} = D^I_\phi f(\mu),\ \ \phi\in L^2(\R^d\to\R^d;\mu).$$
We call   $D^If(\mu)$ the  intrinsic derivative  of  $f$ at  $\mu$. If   $f$  is intrinsic differentiable at all $\mu\in \scr P_2$, we call it intrinsic differentiable on  $\scr P_2$ and denote
$$\|D^If(\mu)\|:= \|D^If(\mu)\|_{L^2(\mu)}=\bigg(\int_{\R^d} |D^If(\mu)|^2\d\mu\bigg)^{\ff 1 2}.$$
\item[$(2)$]     If    $f$ is intrinsic differentiable and  for any  $\mu\in\scr P_2$,
  $$\lim_{\|\phi\|_{L^2(\mu)}\to 0} \ff{f(\mu\circ({\rm Id}+\phi)^{-1})-f(\mu)- D^I_\phi f(\mu)}{\|\phi\|_{L^2(\mu)}}=0,$$
we call   $f$  $L$-differentiable on   $\scr P_2$.  In this case,   $D^If(\mu)$ is also denoted by   $D^Lf(\mu)$, and is called the   $L$-derivative of $f$ at $\mu$. \end{enumerate}
\end{defn}

Then intrinsic derivative was first introduced in \cite{AKR} on the configuration space over a Riemannian manifold, while the $L$-derivative appeared in the Lecture notes \cite{Car} for the study of mean field games and is also called Lions derivative in references.

Note that    the derivative  $D^I f(\mu)\in L^2(\R^d\to\R^d;\mu)$  is   $\mu$-a.e.  defined. In applications, we take its continuous version  if exists.
 The following classes of $L$-differentiable functions are often used in analysis: \beg{enumerate}
 \item[{\bf (a)}]   $f\in C^{1}(\scr P_2):$    if  $f$ is   $L$-differentiable such that for every  $\mu\in\scr P_2$,
there exists a   $\mu$-version   $D^L f(\mu)(\cdot)$ such that    $D^L f(\mu)(x)$ is jointly continuous in    $(x,\mu)\in\R^d\times \scr P_2$.
 \item[{\bf (b)}]    $f\in C_b^1(\scr P_2):$   if   $f\in C^1(\scr P_2)$ and
  $D^L f(\mu)(x)$   is bounded.
 \item[{\bf (c)}]   $f\in C^{1,1}(\R^d\times\scr P_2):$  if $f$ is a continuous function on $\R^d\times\scr P_2$ such that
    $f(\cdot,\mu)\in C^{1}(\R^d), f(x,\cdot)\in C^1(\scr P_2)$ with $\nn f(\cdot,\mu)(x)$  and $D^Lf(x,\cdot)(\mu)(y)$ jointly continuous
    in $(x,y,\mu)\in \R^d\times \R^d\times\scr P_2$. If moreover these derivatives are bounded, we denote $f\in C_b^{1,1}(\R^d\times\scr P_2)$.
     \end{enumerate}
We will state the main result in Section 2 and prove it in Section 3.

\section{The main result}
We will assume that $b_t(\cdot,\mu)$ is Dini continuous for which  we introduce the following class as in  \cite{W16}:
 \beg{equation*}\beg{split}
\D= \Big\{\varphi: [0,+\infty)\to [0,+\infty)| \varphi^{2} \text{ is concave and }\varphi \text{ is increasing with } \int_0^1{\frac{\varphi(s)}{s}\d s}<\infty\Big\}.
\end{split}\end{equation*}
 The condition $\int_0^1{\frac{\varphi(s)}{s}\d s}<\infty$ is known as the Dini condition. Clearly, for any $\alpha\in(0,\frac{1}{2})$ the function $\varphi_1(s)=s^{\alpha}$ is in $\D$. Let $\varphi_2(s):=\frac{1}{\log^{1+\delta}(c+s^{-1})}$ for constants $\delta >0$ and $c>0$ large enough such that $\varphi^{2}_2$ is concave, then $\varphi_2$ is also in $\D$. For any (real or $\R^d$-valued) function $f$ on $\R^d$, let
 $$[f]_\varphi:= \sup_{x\ne y\in \R^d} \bigg\{|f(x)|+\ff{|f(x)-f(y)|}{\varphi(|x-y|)}\bigg\},\ \ \varphi\in \D.$$
For a function $f:[0,\infty)\times E\to \R,$ where $E$ is an abstract space, we denote
$$\|f\|_{T,\infty}:=\sup_{t\in [0,T]\times E} |f_t(x)|,\ \ T>0.$$
Throughout this paper, we make the following assumption.
\beg{enumerate}
\item[$\bf{(H)}$]
For each $t\geq0$ and $x\in\R^d$,
$b_t(x,\cdot)\in C^{1}(\scr P_2)$ with $D^L b_t(x,\mu)(y)$    continuous in $(x,y,\mu)\in \R^d\times\R^d\times\scr P_2$,
$B_t\in C^{1,1}(\R^d\times\scr P_2)$, $\sigma_t\in C^1(\R^d)$ is invertible, such that  for any $T>0$,
$$   \big\|(\|\sigma\| +\|\si^{-1}\|  + |B(0,\dd_0)|+
 [b]_\varphi +\|\nabla\sigma\| +\|D^L b\| +\|\nn B\| + \|D^LB\| )\big\|_{T,\infty} <\infty
$$ holds for some $\varphi\in \D$, where $\dd_0$ is the Dirac measure at $0\in\R^d$, $[\cdot]_{\varphi}$ is the modulus of continuity in $x\in\R^d$, and $D^L$ is Lion's derivative in $\mu\in \scr P_2$.
\end{enumerate}
 The following is a simple example for $b$ satisfying {\bf(H)}.
\begin{exa}
Let $h(x)= |x|^{\alpha}, x\in\mathbb{R}^d$ for $\alpha\in(0,\frac{1}{2})$, let
$f=(f_1,\cdots,f_m)\in C_b^1(\R^d;\R^m)$ for some $m\ge 1$, and let
 $$F: [0,T]\times\R\times\R^m\to \R^d$$ be measurable and bounded such that
 $$c:= \sup_{r\in\R, z\in\R^m, t\in [0,T]} \big\{  |\pp_r F_t(r,z)|+|\nn_z F_t(r,z)|\big\}<\infty.$$
Then
$$b_t(x):= F_t(h(x), \mu(f)),\ \ \mu(f):= \int_{\R^d} f\d\mu$$
satisfies {\bf (H)}. Indeed, we have
$$|b_t(x,\mu)- b_t(y,\mu)|\le c |h(x)-h(y)|\le c|x-y|^\aa,$$
and
$$D^L b_t(x,\mu)= \sum_{i=1}^m \big\<\pp_{z_i} F_t(h(x),z)|_{z=\mu(f)},\nn f_i\big\>$$
satisfies
$$\sup_{t\in [0,T], x\in\R^d, \mu\in \scr P_2} \|D^L b_t(x,\mu)\|\le c \sum_{i=1}^m \|\nn f_i\|_\infty<\infty.$$

\end{exa}

 \begin{rem}\label{ES1}
With the second inequality in \eqref{N*} replacing {\rm \cite[(27)]{HW20a}},  and   with $t$ replacing  $A_t$ in {\rm \cite[(35)]{HW20a}},      the proof of
{\rm \cite[Theorem 1.1(2)]{HW20a}} yields that
{\bf (H)} implies the well-posedness of \eqref{E1}   for distributions in $\scr P_2$.
 We will show that  this assumption also ensures the intrinsic differentiability of  $P_Tf$   for $T>0$ and $f\in \B_b(\R^d)$.  To prove the $L$-differentiability of $P_Tf$, we make the following additional assumption. \end{rem}

\beg{enumerate}\item[{\bf (C)}] For any $T>0$, there exists $p>2$  such that
$$\sup_{(t,x,\mu)\in [0,T]\times\R^d\times\scr P_2} \int_{\R^d}  |D^L(b+B)_t(x,\mu)(y)|^p \mu(\d y)<\infty.$$
\end{enumerate}
Obviously, {\bf (C)} holds if $D^L(b_t+B_t)(x,\mu)(y)$ is bounded in $(t,x,y,\mu)\in [0,T]\times \R^d\times\R^d\times\scr P_2.$

For any $T>0$, let $\C_T:=C([0,T];\R^d)$ be equipped with the uniform norm. For $\vv\in(0,1)$ and $\eta,X_0\in L^2(\Omega\to\R^d,\scr F_0;\P)$,
let $\{X^{\eta,\vv}_t\}_{t\ge 0}$ solve \eqref{E1} with initial value $X_0+\vv\eta$.
The main result of the paper is the following.

\beg{thm}\label{T3.1}
Assume {\bf (H)}.   Then the following statements hold.
\beg{enumerate}
\item[$(1)$] For any $T>0$, the limit
\begin{align}\label{DX}
\nn_\eta X_t:=\lim\limits_{\vv\rightarrow0}\frac{1}{\vv}(X^{\eta,\vv}_t-X_t)
\end{align}
exists in $L^2(\Omega\to\C_T;\P)$, and   there exists a constant $C_T>0$   such that
\begin{align}\label{GE}
\E\left(\sup\limits_{t\in[0,T]}|\nn_\eta X_t|^2\right)
\leq C_T\E|\eta|^2,\ \ \eta,X_0\in L^2(\Omega\to\R^d,\scr F_0;\P).
\end{align}
\item[$(2)$] $P_Tf$ is intrinsically differentiable for any $T>0$ and $f\in \B_b(\R^d)$, and
\beq\label{LH2}
D_{\phi}^I(P_Tf)(\mu)=\mathbb{E}\left(f(X_T)\int_0^T\<\zeta_t^\phi,\d W_t\>\right),\ \ \mu\in\scr P_2, \phi\in L^2(\R^d\to\R^d;\mu)
\end{equation}
holds for  $X_t$ solving  $\eqref{E1}$ with $\L_{X_0}=\mu$, and
$$
\zeta_t^\phi:=\sigma_t (X_t)^{-1}\Big\{g_t'\nn_{\phi (X_0)} X_t
+g_t\E\big[D^L(B_t+b_t)(y,  \L_{X_t})(X_t)\nn_{\phi(X_0)} X_t\big]
\big|_{y=X_t}\Big\}
$$ for $t\in [0,T]$ and $g\in C_b^1([0,T])$ with $g_0=0, g_T=1.$  Consequently,    there exists an increasing function   $C: [0,\infty)\to (0,\infty)$ such that
\beq\label{A1}
\|(D^I_\phi P_tf)(\mu)\|
\leq \ff{C_t}{\ss t} \big\{P_tf^2(\mu)-(P_tf(\mu))^2\big\}^{\ff 1 2},\ \  t>0.\end{equation}
\item[$(3)$]  Assume further   {\bf (C)}.   Then   $P_Tf$ is $L$-differentiable for any  $T>0$ and  $f\in \B_b(\R^d)$. As a result, for any $t>0$ and $\mu,\nu\in \scr P_2$,
 \beq\label{A2} \|P_t^*\mu-P_t^*\nu\|_{var}:= \sup_{\|f\|_\infty\le 1} |(P_t^*\mu)(f)- (P_t^*\nu)(f)|\le \ff{C_t}{\ss t} \W_2(\mu,\nu).\end{equation}
\end{enumerate} \end{thm}

\paragraph{Remark 2.1.} When $b=0$, the Bismut formula and the $L$-differentiability of $P_Tf$ for any $f\in \B_b(\R^d)$ have been proved in \cite{RW}. This is now included in  Theorem \ref{T3.1} as a special case.  When $b\ne 0$, to manage this singular term we have to use Zvonkin's transforms depending on the parameter in initial distributions.

\paragraph{Remark 2.2.} For fixed $\mu\in \scr P_2$, consider the following decoupled SDE:
\begin{align*}
\d X_t^{x,\mu}= b_t(X^{x,\mu}_t,P_t^*\mu)\d t+ \sigma_t(X^{x,\mu}_t,P_t^*\mu)\d W_t,  \ \ \ X_0^{x,\mu}=x\in\mathbb R^n.
\end{align*} Let $p_t^\mu(x,y)$ be the distribution density function of $X_t^{x,\mu}$.
Derivatives of $p_t^\mu(x,y)$ in both $x$ and $\mu$ have  been presented in  \cite{CN}, where $b$ and $\sigma$ are assumed to be $\eta$-H\"older continuous for some $\eta\in(0,1]$ with respect to the spatial variable. In particular, these estimates imply   estimates on $D^L P_tf(\mu)$ for $f\in \B_b(\R^d)$. In fact, let   $P_t^\mu$ be the  transition semigroup of $X_t^{x,\mu}$. Then we have
$$
P_tf(\mu)=\int_{\mathbb R^n}P_t^\mu f(x)\mu(\d x)=\int_{\mathbb R^n\times\mathbb R^n}f(y)p_t^\mu(x,y)\d y\mu(\d x).
$$
Consequently,
\begin{align*}
D^LP_tf(\mu)(z)=\int_{\mathbb R^n\times\mathbb R^n}f(y)\left\{D^Lp_t^\mu(x,y)(z)\right\}\d y\mu(\d x)+\int_{\mathbb R^n}f(y)\nabla_zp_t^\mu(z,y)\d y.
\end{align*}
This combined with \cite[(3.9),Theorem 3.6]{CN} yields
\begin{align*}
\|D^LP_tf(\mu)(\cdot)\|_\infty\leq C\|f\|_\infty\left(t^{-\frac{1}{2}}\vee t^{-\frac{1-\eta}{2}}\right).
\end{align*}

\section{Proof of Theorem \ref{T3.1} }
By {\bf (H)},  \eqref{LH2} implies that  $P_Tf$ is intrinsically differentiable for $T>0$ and $f\in \B_b(\R^d)$,  and \eqref{A1} holds for some increasing function $C$. Then
 \eqref{A2} follows from  the estimate (see \cite{Car}):
\beq\label{LPS} |f(\mu)-f(\nu)|\le \sup_{\gg\in \scr P_2}\|D^L f(\gg)\| \W_2(\mu,\nu),\ \ \mu,\nu\in \scr P_2.\end{equation}
  To prove \eqref{LPS}, let $X,Y$ be two random variables such that
$$\L_{X}=\mu,\ \ \L_Y=\nu,\ \ \E|X-Y|^2=\W_2(\mu,\nu)^2.$$
Then by taking $X_s= (1-s)X+sY$ for $s\in [0,1],$ \eqref{LPS} follows from
 the following chain rule for distributions of random variables, which is taken from \cite[Theorem 2.1]{BRW}, see also
  \cite[Theorem 6.5]{Car} and  \cite[Proposition 3.1]{RW} for earlier results under stronger conditions.

\begin{lem}\label{C-R} Let $\{X_\vv\}_{\vv\in [0,1]}$ be a family of random variables on $\R^d$ such that $\dot X_0:= \lim_{\vv\downarrow 0} \ff 1\vv(X_\vv-X_0)$ exists in $L^2(\OO\to\R^d;\P)$. Let $f$ be a real function on $\scr P_2$. If either $\mu:=\L_{X_0}$ is atomless and $f$ is $L$-differentiable at $\mu$, or $f\in C^1$ in a neighborhood $U$ of $\mu$ such that
$$|D^Lf(\mu)(x)|\le c(1+|x|),\ \ x\in\R^d, \mu\in U$$ holds for some constant $c>0$, then
$$\lim_{\vv\downarrow 0} \ff {f(\L_{X_\vv})-f(\mu)}\vv=\E\<D^Lf(\mu)(X_0), \dot X_0\>.$$
 \end{lem}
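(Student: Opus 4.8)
The plan is to probe $f$ only through deterministic reparametrisations of $\mu$, where the intrinsic derivative is available by definition, and then to show that the genuinely random part of $\dot X_0$ leaves no first-order trace. First I would set $\phi(x):=\E(\dot X_0\mid X_0=x)$, which lies in $L^2(\R^d\to\R^d;\mu)$ by Jensen's inequality, so that the tower property gives
\[
\E\<D^Lf(\mu)(X_0),\dot X_0\>=\E\<D^Lf(\mu)(X_0),\phi(X_0)\>=\<D^Lf(\mu),\phi\>_{L^2(\mu)}=D^I_\phi f(\mu).
\]
Since $D^I_\phi f(\mu)=\lim_{\vv\da0}\vv^{-1}\{f(\mu\circ({\rm Id}+\vv\phi)^{-1})-f(\mu)\}$ straight from the definition, and since $X_\vv=X_0+\vv\dot X_0+\vv r_\vv$ with $r_\vv\to0$ in $L^2$ and $\dot X_0=\phi(X_0)+H$, $\E(H\mid X_0)=0$, the entire statement reduces to the single claim
\[
\lim_{\vv\da0}\ff1\vv\Big(f(\L_{X_\vv})-f\big(\mu\circ({\rm Id}+\vv\phi)^{-1}\big)\Big)=0 .
\]

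The obstacle is that this cannot be obtained by a Lipschitz estimate: coupling through a common $\oo$ gives $\W_2(\L_{X_\vv},\mu\circ({\rm Id}+\vv\phi)^{-1})\le\vv(\|H\|_{L^2}+\|r_\vv\|_{L^2})$, which is only $O(\vv)$. One must use that $H$ is conditionally centred, so that to first order it disperses the mass of $\mu$ with no net direction. I would phrase the target through the lift $\tilde f(X):=f(\L_X)$ on $L^2(\OO\to\R^d;\P)$: it suffices to establish the first-order expansion
\[
\tilde f(X_0+K)=\tilde f(X_0)+\E\<D^Lf(\mu)(X_0),K\>+o(\|K\|_{L^2}),\qquad K\in L^2 ,
\]
along \emph{arbitrary} directions $K$, for then the chain rule is the composition of $\tilde f$ with the $L^2$-differentiable curve $\vv\mapsto X_\vv$, the $r_\vv$-term contributing only $\vv\|D^Lf(\mu)\|_{L^2}\|r_\vv\|_{L^2}=o(\vv)$.

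In the atomless case the decisive input is the identification (going back to Lions; see \cite{Car}) that, when $\mu=\L_{X_0}$ has no atoms, intrinsic differentiability at $\mu$ — a priori only an infinitesimal statement about the deterministic family $\mu\circ({\rm Id}+\vv\phi)^{-1}$ — upgrades to exactly the displayed differentiability of $\tilde f$ at $X_0$ along all $L^2$ directions. Atomlessness is used precisely to realise the perturbed law $\L_{X_0+K}$ as a deterministic pushforward $\mu\circ({\rm Id}+\psi_K)^{-1}$ whose displacement $\psi_K$ has the correct first-order behaviour, equivalently to reparametrise the conditionally centred part $H$ so that its only first-order trace is through $\E(H\mid X_0)$. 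Proving this upgrade — rather than the surrounding bookkeeping — is where essentially all the difficulty sits, and it is the step I expect to be hardest.

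Under the alternative hypotheses ($f\in C^1$ on a neighbourhood $U$ of $\mu$ with $|D^Lf(\gamma)(x)|\le c(1+|x|)$) atomlessness is not needed and I would argue by interpolation. Here $C^1(\scr P_2)$ makes the lift continuously (Fr\'echet) differentiable on $U$, so the fundamental theorem of calculus applies on the Hilbert space $L^2$: writing $Z_\vv:=X_0+\vv\phi(X_0)$ and $\gamma_s:=\L_{Z_\vv+s\vv H}$, for $\vv$ small enough that these laws stay in $U$,
\[
f(\L_{Z_\vv+\vv H})-f(\L_{Z_\vv})=\vv\int_0^1\E\<D^Lf(\gamma_s)(Z_\vv+s\vv H),H\>\,\d s .
\]
The linear growth bound supplies the domination to replace $D^Lf(\gamma_s)(Z_\vv+s\vv H)$ by $D^Lf(\gamma_s)(X_0)$ up to an $o(1)$ error (joint continuity of $D^Lf$ and dominated convergence), after which $\E\<D^Lf(\gamma_s)(X_0),H\>=\E\<D^Lf(\gamma_s)(X_0),\E(H\mid X_0)\>=0$ because $D^Lf(\gamma_s)(X_0)$ is a function of $X_0$. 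Hence the integral is $o(1)$, the left side is $o(\vv)$, and together with the reduction of the first paragraph (the $r_\vv$-remainder being absorbed by the same $\W_2$-estimate, now genuinely $o(\vv)$) this closes the proof.
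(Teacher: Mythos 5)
First, a point of reference: the paper contains no proof of this lemma at all --- it is quoted verbatim from \cite[Theorem 2.1]{BRW} (with \cite[Theorem 6.5]{Car} and \cite[Proposition 3.1]{RW} cited as earlier, weaker versions), so your proposal can only be judged on its own internal correctness. Your opening reduction is sound and well-aimed: taking $\phi=\E(\dot X_0\mid X_0=\cdot)$, noting $\E\<D^Lf(\mu)(X_0),\dot X_0\>=D^I_\phi f(\mu)$ by the tower property, and observing that a $\W_2$-Lipschitz bound cannot dispose of the conditionally centred part $H$ (it is only $O(\vv)$), you correctly isolate the crux: one must upgrade differentiability along deterministic directions $\phi(X_0)$ to differentiability of the lift $\tilde f$ along arbitrary directions $K\in L^2(\OO\to\R^d;\P)$. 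The problem is that in neither case do you actually establish this upgrade, and in the second case your substitute for it is circular.

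In the atomless case you invoke ``the identification going back to Lions; see \cite{Car}'', but in \cite{Car} the $L$-derivative is \emph{defined} through Fr\'echet differentiability of the lift, so no theorem there converts the present paper's intrinsic Definition~1.1 into the lift statement; that conversion is exactly the content to be proved. What it actually requires is a Pratelli-type density theorem: when $\mu$ is atomless, couplings induced by maps are weakly dense in $\C(\mu,\nu)$. Applied to $\pi:=\L_{(X_0,X_0+K)}$, this yields maps $T_n$ with $\mu\circ T_n^{-1}=\L_{X_0+K}$ \emph{exactly} and $\mu\circ(\mathrm{Id},T_n)^{-1}\to\pi$ weakly, so that $f(\L_{X_0+K})=f(\mu\circ(\mathrm{Id}+\phi_n)^{-1})$ with $\phi_n:=T_n-\mathrm{Id}$, and then the uniformity in Definition~1.1(2) --- note: genuine $L$-differentiability, not the ``intrinsic differentiability'' your text appeals to, which is direction-by-direction and too weak to survive the limit in $n$ --- closes the estimate. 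You flag this as the hardest step and leave it undone; since it is essentially the whole lemma, that is a genuine gap rather than bookkeeping. The second case is worse: your ``fundamental theorem of calculus on $L^2$'' identity
\begin{equation*}
f(\L_{Z_\vv+\vv H})-f(\L_{Z_\vv})=\vv\int_0^1\E\<D^Lf(\gamma_s)(Z_\vv+s\vv H),H\>\,\d s
\end{equation*}
presupposes that the lift is Fr\'echet (or at least G\^ateaux) differentiable along the genuinely random, non-$\si(X_0)$-measurable direction $H$ --- precisely the upgrade that atomlessness bought you in the first case and that is unavailable here, since $C^1(\scr P_2)$ in this paper's sense only controls $f$ along deterministic pushforwards $\gg\circ(\mathrm{Id}+\psi)^{-1}$. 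This is why the lemma splits into two cases at all. The standard repair, and in substance the route of \cite{BRW}, is to regularize: replace $X_0$ by $X_0+\dd Z$ with $Z$ an independent nondegenerate random variable, so that $\L_{X_0+\dd Z}$ is atomless and stays in $U$ for small $\dd$, apply the atomless case there, and then let $\dd\downarrow 0$; the joint continuity of $D^Lf(\gg)(x)$ in $(x,\gg)$ and the linear growth bound $|D^Lf(\gg)(x)|\le c(1+|x|)$ appear in the hypotheses precisely to justify this last limit by dominated convergence. As written, both halves of your argument ultimately rest on the statement being proved.
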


Therefore, it suffices to prove Theorem \ref{T3.1}(1), the formula \eqref{LH2}, and Theorem \ref{T3.1}(3).

\subsection{Proof of Theorem \ref{T3.1}(1)}
We first explain that we may   assume
 \beq\label{BBB}
B_t(\cdot,\mu)\in C^3(\R^d),\ \  \sum_{j=1}^3\|\nn^j B\|_{T,\infty}<\infty,\ \ T>0.\end{equation}
Indeed, for $0\le \rr\in C_0^\infty(\R^d)$ with $\int_{\R^d}\rr(x)\d x=1,$ let
$$\tt B_t(x,\mu):= \int_{\R^d}B_t(y,\mu)\rr(x-y)\d y,\ \ t\ge 0, x\in \R^d,\mu\in \scr P_2.$$
By {\bf (H)}, this implies \eqref{BBB} for $\tt B$ replacing $B$, and that $B_t-\tt B_t$ is bounded with $\|\nn(B-\tt B)\|_{T,\infty}<\infty$. By combining $B_t-\tt B_t$ with $b$, we may  and do assume that $B$ satisfies \eqref{BBB}.

Next, we make the following distribution dependent Zvonkin's transform to regularize the SDE \eqref{E1}.
For any $\lambda\ge 0,$   $T>0$ and $\hat\mu\in \C_{T,\scr P_2}:=C([0,T];\scr P_2)$, let
$$b_t^{\hat\mu}(x):= b_t(x,\hat \mu_t),\ \ B_t^{\hat\mu}(x):= B_t(x,\hat \mu_t)\ \ t\in [0,T], x\in\R^d,$$ and consider the following PDE for $u^{\ll,\hat\mu}: [0,T]\times\R^d\to \R^d$:
\beq\label{PDE}
 \partial_t u_t^{\ll,\hat\mu} +\frac{1}{2}\mathrm{Tr} (\sigma_t\sigma_t^\ast\nabla^2u_t^{\ll,\hat\mu})+\nabla_{b_t^{\hat\mu}+B_t^{\hat \mu}}u_t^{\ll,\hat\mu}+b_t^{\hat\mu}=\lambda u_t^{\ll,\hat\mu},\ \ u_T^{\ll,\hat\mu}=0.
\end{equation}
To solve this equation, we consider the flow induced by $B_t^{\hat\mu}$:
$$\pp_t \psi_t=  B_t^{\hat\mu}\circ \psi_t,\ \ \psi_{T}(x)=x,\ \ t\in [0,T], x\in\R^d.$$
By \eqref{BBB},  $\psi_t$ is a diffeomorphism on $\R^d$ and
 \beq\label{PSS} \sup_{t\in [0,T],\hat\mu\in \C_{T,\scr P_2}} \sum_{j=1}^3\big\{\|\nn^j \psi_t\|_\infty+\|\nn^j\psi_t^{-1}\|_\infty\big\}<\infty.\end{equation}
Let
\begin{align*}
&\bar{\sigma}_t=\sqrt{(\nabla\psi_t)^{-1}(\sigma_t\sigma_t^\ast)(\psi_t)[(\nabla\psi_t)^{-1}]^\ast},\\
&\bar{b}_t^{\hat\mu}=(\nabla\psi_t)^{-1}b_t^{\hat\mu}(\psi_t)
-\frac{1}{2}\sum_{j=1}^d\mathrm{Tr}[(\sigma_t\sigma^\ast_t)(\psi_t) [(\nabla\psi_t)^{-1}]^\ast\nabla^2\psi_t^j(\nabla\psi_t)^{-1}](\nabla\psi_t)_j^{-1}.
\end{align*}
By \cite{W16},   {\bf (H)} and \eqref{PSS}  imply that the PDE
\beq\label{PDE9}
\pp_t \bar u_t^{\ll,\hat\mu} +\frac{1}{2}\mathrm{Tr} (\bar{\sigma}_t\bar{\sigma}_t^\ast\nabla^2\bar u_t^{\ll,\hat\mu})+\nabla_{\bar{b}_t^{\hat\mu}}\bar{u}_t^{\ll,\hat\mu}+b_t^{\hat\mu}\circ \psi_t=\lambda \bar u_t^{\ll,\hat\mu},\ \ \bar{u}_T^{\ll,\hat\mu}=0
\end{equation}
has a unique solution with
$$\lim_{\ll\to \infty} \sup_{\hat\mu\in \C_{T,\scr P_2}}\|\nn \bar{u}^{\ll,\hat\mu}\|_{T,\infty}=0,\ \ \sup_{\ll\ge 0,\hat\mu\in \C_{T,\scr P_2}} \|\nn^2 \bar{u}^{\ll,\hat\mu}\|_{T,\infty}<\infty.$$
So, {$u_t^{\ll,\hat\mu}:= \bar u_t^{\ll,\hat \mu}\circ \psi_t^{-1}$ } solves \eqref{PDE} with
\beq\label{N*} \lim_{\ll\to \infty} \sup_{\hat\mu\in \C_{T,\scr P_2}}\|\nn u^{\ll,\hat\mu}\|_{T,\infty}=0,\ \ \sup_{\ll\ge 0,\hat\mu\in \C_{T,\scr P_2}} \|\nn^2 u^{\ll,\hat\mu}\|_{T,\infty}<\infty.\end{equation} By the uniqueness of \eqref{PDE9} and that a solution $u^{\ll,\hat\mu}_t$ to \eqref{PDE} also gives a solution $\bar u_t^{\ll,\hat \mu}:= u_t^{\ll,\hat \mu}\circ\psi_t$  to \eqref{PDE9}, \eqref{PDE} has a unique solution.
By \eqref{PSS} and  \eqref{N*},  there exists a universal constant $\ll_0>0$ such that
\beq\label{u0}
\sup_{\hat\mu\in\C_{T,\scr P_2}}\|\nabla u^{\lambda,\hat\mu}\|_{T,\infty}\leq
\frac{1}{5},\ \
\sup_{\hat{\mu}\in \C_{T,\scr P_2}}\|\nabla^2 u^{\lambda,\hat\mu}\|_{T,\infty}\le \ll_0,\ \ \ll\ge \ll_0.
\end{equation}
For any $\mu\in \scr P_2$, let $X_0$ be $\F_0$-measurable
with $\L_{X_0}=\mu$, and simply denote
$$u_t^{\ll,\mu}:= u_t^{\ll,\hat\mu}\ \text{for}\ \hat\mu_t=P_t^*\mu,\ \
b_t^\mu(x):= b_t(x, P_t^*\mu),\ \ B_t^\mu(x):= B_t(x, P_t^*\mu), \ \ (t,x)\in [0,T]\times\R^d.
$$
Let $\theta^{\lambda,\mu}_t(x)=x+u^{\lambda,\mu}_t(x)$.
By \eqref{PDE} and It\^o's formula, we derive
\beq\label{E-X}
\d \theta^{\lambda,\mu}_t(X_t)
= \big\{B_t^{\mu}(X_t)+\lambda u^{\lambda,\mu}_t(X_t)\big\}\d t+\big\{(\nn\theta_t^{\lambda,\mu})\sigma_t\big\}(X_t)\,\d W_t.
\end{equation}
Then
\beq\label{YT}Y_t:=\theta^{\lambda,\mu}_t(X_t),\ \ t\in [0,T]\end{equation} solves the SDE
\begin{align}\label{E-Y0}
\d Y_t=\tilde{b}^\mu_t(Y_t)\d t+ \tilde{\sigma}^\mu_t(Y_t)\,\d W_t,\ \ Y_0=\theta^{\lambda,\mu}_0(X_0)
\end{align}
where
\begin{align}\label{t-b-sig}
\tilde{b}^\mu_t:=(B_t^\mu+\lambda u^{\lambda,\mu}_t)\circ(\theta^{\lambda,\mu}_t)^{-1},\ \ \tilde{\sigma}^\mu_t:=\big\{(\nn\theta_t^{\lambda,\mu})\sigma_t\big\}\circ(\theta^{\lambda,\mu}_t)^{-1},\ \ t\in[0,T].
\end{align}
By {\bf (H)} and \eqref{u0},  we have
\begin{align}\label{g-tb}
\sup_{\mu\in \scr P_2}\big\{\|\nabla\tilde{b}^\mu\|_{T,\infty}+ \|\nn\tt\si^\mu\|_{T,\infty}\big\}
<\infty.
\end{align}
Let $\eta, X_0\in L^2(\OO\to\R^d,\F_0;\P)$. For any $\vv\ge 0$, let
 $X_t^\vv$ solve  the SDE
\begin{align}\label{eps}
\d X_t^\vv=(B_t+b_t)(X_t^\vv,\L_{X_t^\vv})\d t +\si_t(X_t^\vv) \d W_t,
\ \ X_0^\vv=X_0+\vv\eta.
\end{align}
By It\^o's formula and \eqref{t-b-sig},  $Y_t^\vv:= \theta_t^{\ll,\mu}(X_t^\vv)$ solves the SDE
\beq
\begin{split}\label{E-Y}
\d Y_t^\vv=&\,\Big\{\tilde{b}^\mu_t(Y_t^\vv)+ \big[\nn\theta_t^{\lambda,\mu}[(B_t+b_t)(\cdot,\L_{\cdot})-(B^\mu_t+b^\mu_t)]\big]\big((\theta^{\lambda,\mu}_t)^{-1}(Y_t^\vv)\big)\Big\}\d t\\
&+\tilde{\sigma}^\mu_t(Y_t^\vv)\d W_t,\ \ t\in [0,T], Y^\vv_0=\theta^{\lambda,\mu}_0(X_0+\vv\eta).
\end{split}
\end{equation}

\begin{lem}\label{Le-xi}
Under $\bf{(H)}$, the family $\{\xi_t^\vv:= \vv^{-1}(Y_t^\vv-Y_t)\}_{t\in [0,T], \vv\in (0,1]}$   is $L^2$-uniformly integrable, i.e.
\begin{align}\label{n2}
\lim_{n\to\infty}\sup\limits_{\vv\in(0,1]}\E\left(\sup\limits_{t\in[0,T]}|\xi^\vv_t|^21_{\{\sup\limits_{t\in[0,T]}|\xi^\vv_t|^2\geq n\}}\right)=0.
\end{align}
\end{lem}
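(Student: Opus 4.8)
The plan is to write down the equation solved by $\xi_t^\vv$, observe that {\bf (H)} together with the Zvonkin reduction \eqref{E-Y0}--\eqref{E-Y} turns it into a \emph{linear} SDE with bounded coefficients, and then obtain \eqref{n2} by propagating the uniform integrability of the initial data rather than by a uniform $L^p$-bound, which is unavailable since only $\eta\in L^2$ is assumed. Subtracting \eqref{E-Y0} from \eqref{E-Y}, dividing by $\vv$, and using the mean value theorem with the gradient bound \eqref{g-tb}, I would write
\beq\label{xi-lin}
\d\xi_t^\vv=\big(A_t^\vv\,\xi_t^\vv+R_t^\vv\big)\,\d t+\Sigma_t^\vv\,\xi_t^\vv\,\d W_t,\qquad \xi_0^\vv=\vv^{-1}\big(\theta_0^{\ll,\mu}(X_0+\vv\eta)-\theta_0^{\ll,\mu}(X_0)\big),
\end{equation}
where $A_t^\vv=\int_0^1\nn\tilde b_t^\mu(Y_t+r(Y_t^\vv-Y_t))\,\d r$ and $\Sigma_t^\vv$ is the analogous matrix built from $\nn\tilde\si_t^\mu$, so that $\|A_t^\vv\|+\|\Sigma_t^\vv\|\le C$ by \eqref{g-tb}, while $R_t^\vv$ is $\vv^{-1}$ times the bracketed extra drift of \eqref{E-Y}.

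The forcing $R^\vv$ is where the distribution dependence enters, and its control is the first technical point. Since $\nn\theta_t^{\ll,\mu}=I+\nn u_t^{\ll,\mu}$ is bounded by \eqref{u0} and $\theta_t^{\ll,\mu}$ is bi-Lipschitz, while {\bf (H)} and the boundedness of $\|D^L(B+b)\|$ make $(B_t+b_t)(x,\cdot)$ Lipschitz in $\W_2$ uniformly in $(t,x)$ via \eqref{LPS}, I would bound
$$
|R_t^\vv|\le C\vv^{-1}\W_2(\L_{X_t^\vv},\L_{X_t})\le C\vv^{-1}\big(\E|X_t^\vv-X_t|^2\big)^{1/2}\le C\big(\E|\xi_t^\vv|^2\big)^{1/2}=:C\,m_t^\vv,
$$
where the last step uses $X_t^\vv-X_t=(\theta_t^{\ll,\mu})^{-1}(Y_t^\vv)-(\theta_t^{\ll,\mu})^{-1}(Y_t)$ and the Lipschitz property of $(\theta_t^{\ll,\mu})^{-1}$. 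A standard It\^o--Gr\"onwall estimate on $\E|\xi_t^\vv|^2$ then gives $\sup_{t\le T}m_t^\vv\le C_T(\E|\eta|^2)^{1/2}$ uniformly in $\vv\in(0,1]$; in particular $R^\vv$ is bounded by a deterministic constant, and the same estimate yields the a priori bound $\sup_\vv\E\sup_{t\le T}|\xi_t^\vv|^2\le C_T\E|\eta|^2$.

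For \eqref{n2} itself I would exploit the linearity of \eqref{xi-lin} in its now fixed adapted coefficients $A^\vv,\Sigma^\vv,R^\vv$. Since $\theta_0^{\ll,\mu}$ is Lipschitz, $|\xi_0^\vv|\le C|\eta|$, so $\{|\xi_0^\vv|^2\}_{\vv\in(0,1]}$ is dominated by $C^2|\eta|^2\in L^1$ and hence uniformly integrable. Fixing $K>0$, I would split by superposition $\xi^\vv=\xi^{\vv,p}+\xi^{\vv,1}+\xi^{\vv,2}$, where $\xi^{\vv,p}$ solves \eqref{xi-lin} with the forcing $R^\vv$ and zero initial value, and $\xi^{\vv,1},\xi^{\vv,2}$ are the homogeneous solutions started from $\xi_0^\vv 1_{\{|\eta|\le K\}}$ and $\xi_0^\vv 1_{\{|\eta|>K\}}$. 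Because $R^\vv$ is bounded by a deterministic constant and $|\xi_0^\vv 1_{\{|\eta|\le K\}}|\le CK$, the standard $L^p$-estimate for linear SDEs gives, for a fixed $p>2$ and uniformly in $\vv$,
$$
\E\sup_{t\le T}|\xi_t^{\vv,p}|^p\le C_{p,T}(\E|\eta|^2)^{p/2},\qquad \E\sup_{t\le T}|\xi_t^{\vv,1}|^p\le C_{p,T}K^p,
$$
so both families $\{\sup_t|\xi_t^{\vv,p}|^2\}_\vv$ and $\{\sup_t|\xi_t^{\vv,1}|^2\}_\vv$ are uniformly integrable, while the homogeneous $L^2$-estimate gives $\E\sup_{t\le T}|\xi_t^{\vv,2}|^2\le C_T\E\big(|\eta|^2 1_{\{|\eta|>K\}}\big)$, which is small uniformly in $\vv$ once $K$ is large.

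Combining these three facts yields \eqref{n2}: given $\delta>0$, first choose $K$ so that the $\xi^{\vv,2}$-contribution is below $\delta$, and then, with $K$ fixed, use the uniform integrability of the other two pieces together with Markov's inequality $\P(\sup_t|\xi_t^\vv|^2\ge n)\le C_T n^{-1}\E|\eta|^2$ to absorb their truncated contributions for $n$ large. The main obstacle is precisely the combination of the self-referential forcing created by the law-dependence of the drift, resolved by first proving the deterministic bound on $m_t^\vv$, with the need to deduce $L^2$-uniform integrability from merely $L^2$ initial data, for which the linear superposition above, rather than a uniform higher-moment bound, is the essential device.
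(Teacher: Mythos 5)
Your proof is correct, but it resolves the key difficulty by a different device than the paper. The shared part is the setup: like you, the paper exploits that after the Zvonkin transform the equation for $\xi^\vv_t$ is affine with coefficients bounded uniformly in $\vv$ (via \eqref{g-tb}, \eqref{u0}), and that the law-dependent forcing is bounded by $c\,\vv^{-1}\W_2(\L_{X_t^\vv},\L_{X_t})\le c\,(\E|\xi_t^\vv|^2)^{1/2}$ through \eqref{LPS}; this is exactly the content of the paper's differential inequality \eqref{*A1} and the uniform $L^2$ bound \eqref{p2}, which match your bounds on $m_t^\vv$ and $\E\sup_t|\xi_t^\vv|^2$. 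The divergence is in how $L^2$-uniform integrability is extracted from merely $L^2$ data. The paper runs a pathwise (stochastic Gronwall) argument on \eqref{*A1}: it derives the multiplicative domination $\sup_{t\in[0,T]}|\xi_t^\vv|^2\le c_2\big(|\eta|^2+\E|\eta|^2\big)N_\vv$ of \eqref{xi-vv}, where $N_\vv$ is built from the running supremum of exponential martingales and satisfies $\sup_\vv \E (N_\vv^2)<\infty$ and $\E(N_\vv|\F_0)\le K$; uniform integrability then follows by splitting the expectation on $\{|\eta|^2+\E|\eta|^2\le m\}$ versus its complement, since the $\F_0$-measurable factor is a single integrable random variable. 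You instead truncate at the level of the \emph{initial condition} and use additive superposition of the affine SDE: a forced part with deterministically bounded forcing, a homogeneous part with bounded ($\le CK$) data, and a homogeneous tail part, obtaining genuine $L^p$ ($p>2$) bounds for the first two and a small uniform $L^2$ bound for the third, then concluding via H\"older and Markov. Both routes hinge on the same structural facts; the paper's is more compact (one pathwise bound, no auxiliary equations, and it never needs moments of $\xi^\vv$ beyond $L^2$), while yours is more modular and arguably more transparent, since it reduces everything to textbook moment estimates for linear SDEs with bounded coefficients and shows explicitly that uniform integrability propagates from the initial data. One point worth making explicit in your write-up, though it is easily supplied: the superposition step uses that the frozen coefficients $A^\vv_t,\Sigma^\vv_t,R^\vv_t$ are adapted processes determined by the given $Y_t,Y_t^\vv$ (not by the solution of the auxiliary equations), so pathwise uniqueness for the affine SDE with bounded coefficients identifies $\xi^\vv$ with the sum of the three pieces.
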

\begin{proof} By \eqref{LPS}, \eqref{t-b-sig}, \eqref{g-tb} and \eqref{E-Y}, we find a constant $c>0$ such that for any $\vv\in (0,1]$, the It\^o's formula gives
\beq\label{*A1} \d |\xi_t^\vv|^2\le c\big(|\xi_t^\vv|^2+\E|\xi_t^\vv|^2\big)\d t+ c |\xi_t^\vv|^2 \d M_t^\vv,\ \ t\in [0,T],|\xi_0^\vv|^2\le c|\eta|^2,\end{equation}
for some martingale $M_t^\vv$ with $\d\<M^\vv\>_t\le \d t.$ By BDG's inequality,   this implies
\begin{align}\label{p2}\E \Big[\sup_{t\in [0,T]} |\xi_t^\vv|^2\Big]\le c_1\E|\eta|^2,\ \ \vv\in [0,1]
 \end{align}for some constant $c_1>0.$
Combining this with \eqref{*A1} we obtain
 $$\d \big\{|\xi_t^\vv|^2 \e^{-c(t+M_t^\vv)}\big\}\le cc_1(\E|\eta|^2) \e^{-c(t+M_t^\vv)}\d t,$$
 so that for $$N_\vv:= \frac{1}{2}\sup_{s\in [0,T]}\e^{2cM_s^\vv}+\frac{1}{2}\sup_{s\in [0,T]} \e^{-2cM_s^\varepsilon},$$ we find a constant $c_2>0$ such that
\beq\label{xi-vv}\begin{split}D_\vv:&= \sup_{t\in [0,T]} |\xi_t^\vv|^2 \le \e^{cT}|\eta|^2 N_\vv +cc_1(\E|\eta|^2) T\e^{cT}N_\vv\\
&\le c_2(|\eta|^2 +\E|\eta|^2) N_\vv,\   \vv \in (0,1].\end{split}\end{equation}

For any $n>m>1$, by BDG's inequality and $\d\<M^\vv\>_t\le \d t$, we find a constant $K>0$ such that
this implies
\beg{align*} &\E(D_\vv- c_2n)^+ \le c_2 \E\big[((|\eta|^2 +\E|\eta|^2)N_\vv- n)^+\big] \\
&\le c_2 m \E\big[1_{\{|\eta|^2+\E|\eta|^2\le m\}}(N_\vv-n/m)^+\big] + c_2\E\big[(|\eta|^2+\E|\eta|^2)1_{\{|\eta|^2+\E|\eta|^2>m\}}\E(N_\vv|\F_0)\big]\\
&\le \ff{c_2m}{n/m}  \E(N_\vv^2)+ K   \E\big[(|\eta|^2+\E|\eta|^2)1_{\{|\eta|^2+\E|\eta|^2>m\}}\big],\ \ \vv\in (0,1].\end{align*}
Taking supremum with respect to $\vv\in(0,1]$ on both sides, and letting first $n\to\infty$ then $m\to \infty$ we finish the proof.
\end{proof}

For ($L$-)differentiable (real, vector, or matrix valued) functions $f$ on $\R^d$ and   $g$ on $\scr P_2$, let
\begin{align}\label{t-XI}
\tilde{\Xi}^\vv_g(t):=\frac{1}{\vv}(g(\L_{X^\vv_t})-g(\L_{X_t}))
-\E\<D^Lg(\L_{X_t})(X_t),\nn(\theta_t^{\lambda,\mu})^{-1}(Y_t)\xi^\vv_t\>, \end{align}
\begin{align}\label{XI}
\Xi^\vv_f(t):=\frac{1}{\vv}(f(Y_t^\vv)-f(Y_t))-\nn_{\xi^\vv_t}f(Y_t),\ \ \vv>0, t\in [0,T].
\end{align}
The following lemma can be proved by using Lemma \ref{C-R}, \eqref{Le-xi}  and the argument in the proof of \cite[Lemma 3.4]{RW},   we omit the details to save space.

\begin{lem}\label{Le-Xi}
Assume $\bf{(H)}$. For any function $f\in C^1(\R^d)$ and $g\in C_b^1(\scr P_2)$ with $ \|\nn f\|_\infty+\sup\limits_{\mu\in\scr P_2}
\|D^Lg(\mu)\|<\infty,$
there exists a constant $C>0$ such that
$$
|\Xi^\vv_f(t)|^2\leq C\|\nn f\|^2_\infty|\xi^\vv_t|^2, \ \ |\tilde{\Xi}^\vv_g(t)|^2\leq C\|D^L g\|_\infty^2\E|\xi^\vv_t|^2, \ \ t\in[0,T],
$$
and
$$
\lim\limits_{\vv\rightarrow0}(\E|\Xi^\vv_f(t)|^2+|\tilde{\Xi}^\vv_g(t)|^2)=0,\ \ t\in[0,T].
$$
\end{lem}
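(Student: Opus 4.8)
The plan is to rewrite each remainder as an integral (Taylor--type) expression, bound it crudely by $\|\nn f\|_\infty$ (resp. $\|D^L g\|_\infty$) times $\xi^\vv_t$, and then upgrade the crude bound to the vanishing of the limit via the $L^2$-uniform integrability \eqref{n2} and the $L^2$-bound \eqref{p2} of Lemma \ref{Le-xi}. For $\Xi^\vv_f$, write $Y^\vv_t=Y_t+\vv\xi^\vv_t$; since $f\in C^1(\R^d)$, the fundamental theorem of calculus gives
$$\Xi^\vv_f(t)=\int_0^1\big\<\nn f(Y_t+r\vv\xi^\vv_t)-\nn f(Y_t),\,\xi^\vv_t\big\>\,\d r,$$
whose integrand is bounded by $2\|\nn f\|_\infty|\xi^\vv_t|$, giving the first inequality with $C=4$. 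For the limit set
$$\Phi^\vv_t:=\Big(\int_0^1\big|\nn f(Y_t+r\vv\xi^\vv_t)-\nn f(Y_t)\big|\,\d r\Big)^2\le 4\|\nn f\|_\infty^2,$$
so $\E|\Xi^\vv_f(t)|^2\le\E[\Phi^\vv_t|\xi^\vv_t|^2]$. As $\vv\xi^\vv_t=Y^\vv_t-Y_t\to0$ in $L^2$, continuity of $\nn f$ yields $\Phi^\vv_t\to0$ in probability, hence $\E\Phi^\vv_t\to0$ by bounded convergence. Splitting $\E[\Phi^\vv_t|\xi^\vv_t|^2]$ over $\{|\xi^\vv_t|^2\le n\}$ and its complement bounds it by $n\,\E\Phi^\vv_t+4\|\nn f\|_\infty^2\sup_\vv\E[|\xi^\vv_t|^2 1_{\{|\xi^\vv_t|^2>n\}}]$; letting $\vv\to0$ and then $n\to\infty$ and using \eqref{n2} gives $\E|\Xi^\vv_f(t)|^2\to0$.

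For $\tilde\Xi^\vv_g$ the obstruction is that $\L_{X^\vv_t}$ is not a push-forward of $\L_{X_t}$, so I interpolate linearly by $Z^\vv_s:=(1-s)X_t+sX^\vv_t$, $s\in[0,1]$, for which $\dot Z^\vv_s\equiv X^\vv_t-X_t$ exists in $L^2$. Since $g\in C_b^1(\scr P_2)$ obeys $|D^L g(\nu)(x)|\le\|D^L g\|_\infty(1+|x|)$, Lemma \ref{C-R} applies at every $s$, and joint continuity of $D^L g$ makes the derivative continuous in $s$, so the fundamental theorem of calculus gives
$$\ff1\vv\big(g(\L_{X^\vv_t})-g(\L_{X_t})\big)=\int_0^1\E\big\<D^L g(\L_{Z^\vv_s})(Z^\vv_s),\,\ff{X^\vv_t-X_t}\vv\big\>\,\d s.$$
Since $X_t=(\theta^{\ll,\mu}_t)^{-1}(Y_t)$ and $X^\vv_t=(\theta^{\ll,\mu}_t)^{-1}(Y_t+\vv\xi^\vv_t)$, a further application expresses $\ff{X^\vv_t-X_t}\vv=\int_0^1\nn(\theta^{\ll,\mu}_t)^{-1}(Y_t+r\vv\xi^\vv_t)\xi^\vv_t\,\d r$. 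Subtracting the second term of \eqref{t-XI} splits $\tilde\Xi^\vv_g(t)=I_\vv+II_\vv$, where
$$I_\vv=\int_0^1\E\big\<D^L g(\L_{Z^\vv_s})(Z^\vv_s)-D^L g(\L_{X_t})(X_t),\,\ff{X^\vv_t-X_t}\vv\big\>\,\d s,$$
$$II_\vv=\E\Big\<D^L g(\L_{X_t})(X_t),\int_0^1\big[\nn(\theta^{\ll,\mu}_t)^{-1}(Y_t+r\vv\xi^\vv_t)-\nn(\theta^{\ll,\mu}_t)^{-1}(Y_t)\big]\xi^\vv_t\,\d r\Big\>.$$

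By \eqref{u0}, $(\theta^{\ll,\mu}_t)^{-1}$ is $C^1$ with bounded, continuous gradient, so $\ff1\vv|X^\vv_t-X_t|\le c|\xi^\vv_t|$; combined with boundedness of $D^L g$, both $|I_\vv|$ and $|II_\vv|$ are at most $c\|D^L g\|_\infty(\E|\xi^\vv_t|^2)^{1/2}$, and squaring gives the second inequality. For the limits, $II_\vv$ is the Taylor remainder of $(\theta^{\ll,\mu}_t)^{-1}$: continuity of $\nn(\theta^{\ll,\mu}_t)^{-1}$ makes the inner bracket tend to $0$ in probability while staying bounded, hence to $0$ in $L^2$, and Cauchy--Schwarz with the $L^2$-bound \eqref{p2} on $\xi^\vv_t$ forces $II_\vv\to0$. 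In $I_\vv$, from $\vv\xi^\vv_t\to0$ in $L^2$ we get $Z^\vv_s\to X_t$ in $L^2$ and $\L_{Z^\vv_s}\to\L_{X_t}$ in $\W_2$ uniformly in $s$, so joint continuity and boundedness of $D^L g$ give $D^L g(\L_{Z^\vv_s})(Z^\vv_s)-D^L g(\L_{X_t})(X_t)\to0$ in $L^2$; pairing with the $L^2$-bounded factor $\ff{X^\vv_t-X_t}\vv$ and applying dominated convergence in $s$ yields $I_\vv\to0$. Hence $\tilde\Xi^\vv_g(t)\to0$.

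The main obstacle is the passage from the $L^\infty$-type bounds to the $L^2$-limits. For $\Xi^\vv_f$ the vanishing factor $\Phi^\vv_t$ (bounded, but tending to $0$ only in probability) multiplies $|\xi^\vv_t|^2$, which is merely $L^1$-bounded; such a product need not have vanishing expectation, and it is exactly the $L^2$-uniform integrability \eqref{n2} of Lemma \ref{Le-xi} that rescues the truncation argument, trading the divergent constant $n$ on $\{|\xi^\vv_t|^2\le n\}$ against $\E\Phi^\vv_t\to0$. For $\tilde\Xi^\vv_g$ the analogous products factor into two $L^2$-controlled pieces, so Cauchy--Schwarz together with the $L^2$-bound \eqref{p2} suffices. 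A secondary point requiring care is the legitimacy of the chain rule along $Z^\vv_s$: one must check that Lemma \ref{C-R} applies at each $s$ (via the linear-growth bound on the bounded $D^L g$) and that the resulting derivative is continuous in $s$, so that the fundamental theorem of calculus is valid.
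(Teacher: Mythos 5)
Your proposal is correct and takes essentially the approach the paper intends: the paper omits the proof, referring to Lemma \ref{C-R}, Lemma \ref{Le-xi} and the argument of \cite[Lemma 3.4]{RW}, and your argument consists of exactly these ingredients --- the chain rule of Lemma \ref{C-R} applied along the linear interpolation, Taylor expansions controlled by \eqref{u0}, Cauchy--Schwarz with the $L^2$-bound \eqref{p2} for $\tilde{\Xi}^\vv_g$, and the truncation against the uniform integrability \eqref{n2} for the limit of $\E|\Xi^\vv_f(t)|^2$. Your closing observation pinpointing where \eqref{n2} (rather than \eqref{p2} alone) is genuinely needed is also accurate.
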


\begin{lem}\label{ti-v}
Assume $(\bf{H})$. Then the limit
\begin{align}\label{v-phi}
\nn_\eta Y_t:=\lim_{\vv\to 0}\frac{1}{\varepsilon}(Y_t^\vv-Y_t)
\end{align}
exists in $L^2(\Omega\to\C_T;\P),$ and is the unique solution to the   linear equation
\begin{equation}\label{v-phi-eq}\beg{split}
v_t^\eta
=&\,[\nn\theta^{\lambda,\mu}_0](X_0)\eta
+\int_0^t\Big\{\nn_{v_s^\eta}\tilde{b}^\mu_s(Y_s)+ F_s^\mu\big(X_s\big)v_s^\eta \Big\}\d s\\
&+\int_0^t\nn_{v_s^\eta}\tilde{\sigma}^\mu_s(Y_s)\,\d W_s, \ \ t\in [0,T],\end{split}\end{equation} where for any random variables $X,v$ on $\R^d$ and $s\in [0,T]$,
\beq\label{FSM}F_s^\mu(X)v:= \{\nn\theta_s^{\lambda,\mu}\}(X)
\E\big[D^L(B_s+b_s)(y,\L_{X})(X)(\nn \theta^{\lambda,\mu}_s)^{-1} (X)v\big]\big|_{y=X}.\end{equation}
Consequently, for any $p\ge 1$ there exists   a constant $c>0$ such that
\begin{align}\label{ve}
\E\Big(\sup\limits_{t\in[0,T]}|\nn_\eta Y_t|^p\Big|\F_0\Big)
\leq c\big(|\eta|^p + (\E|\eta|^2)^{\ff p2}\big),\ \ \eta,X_0\in L^2(\OO\to\R^d,\F_0;\P).
\end{align}
\end{lem}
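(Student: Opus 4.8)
The plan is to treat the two assertions separately: first to establish, in its own right, that the linear equation \eqref{v-phi-eq} admits a unique solution in $L^2(\OO\to\C_T;\P)$ obeying \eqref{ve}, and then to show that $\xi^\vv:=\vv^{-1}(Y^\vv-Y)$ converges in $L^2(\OO\to\C_T;\P)$ to that solution, which we then call $\nn_\eta Y$. The structural point is that \eqref{v-phi-eq} is a \emph{linear McKean--Vlasov} SDE. By \eqref{g-tb} the maps $v\mapsto\nn_v\tt b^\mu_s(Y_s)$ and $v\mapsto\nn_v\tt\si^\mu_s(Y_s)$ are bounded $\F_s$-adapted linear maps, whereas the term $F^\mu_s(X_s)v$ of \eqref{FSM} is \emph{non-local}: after freezing the outer variable $y$, the inner expectation integrates out an independent copy of $(X_s,v)$, so by Cauchy--Schwarz together with the boundedness of $D^L(B+b)$, $\nn\theta^{\ll,\mu}_s$ and $(\nn\theta^{\ll,\mu}_s)^{-1}$ (guaranteed by {\bf(H)} and \eqref{u0}) one gets the linear-growth bound $|F^\mu_s(X_s)v|\le C(\E|v|^2)^{\ff12}$ and the companion Lipschitz bound $|F^\mu_s(X_s)(v-w)|\le C(\E|v-w|^2)^{\ff12}$.

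To solve \eqref{v-phi-eq} I would run a Picard iteration in $L^2(\OO\to\C_T;\P)$: replacing $F^\mu_s(X_s)v_s$ by $F^\mu_s(X_s)v^{(n)}_s$ formed from the previous iterate turns each step into a genuine linear SDE with bounded coefficients, uniquely solvable with the usual $L^2$-estimate, and the Lipschitz bound above makes the iteration a contraction on a short interval; concatenating over $[0,T]$ gives a unique global solution $\nn_\eta Y$ with $\E\sup_{t\le T}|\nn_\eta Y_t|^2\le C\E|\eta|^2$. The conditional bound \eqref{ve} then follows by conditioning on $\F_0$ and combining the BDG and Gronwall inequalities: the bounded local linear part propagates the $\F_0$-measurable datum $[\nn\theta^{\ll,\mu}_0](X_0)\eta$, which is bounded by $C|\eta|$ and yields the $|\eta|^p$ contribution, while the non-local part enters only through the deterministic source $\le C(\E|\nn_\eta Y_s|^2)^{\ff12}\le C(\E|\eta|^2)^{\ff12}$, which upon integration and raising to the $p$-th power produces the $(\E|\eta|^2)^{\ff p2}$ term.

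For the convergence, using \eqref{E-Y0} and \eqref{E-Y} I would write the integral equation for $\xi^\vv$ and compare it with \eqref{v-phi-eq}. Via \eqref{XI}, the regular increments split as $\vv^{-1}(\tt b^\mu_s(Y^\vv_s)-\tt b^\mu_s(Y_s))=\nn_{\xi^\vv_s}\tt b^\mu_s(Y_s)+\Xi^\vv_{\tt b}(s)$ and analogously for $\tt\si^\mu$, the remainders being controlled and vanishing in $L^2$ by Lemma \ref{Le-Xi}. The distribution-dependent increment is
\beq\label{Gvv}
\vv^{-1}\Big\{\nn\theta^{\ll,\mu}_s\big[(B_s+b_s)(\cdot,\L_\cdot)-(B^\mu_s+b^\mu_s)\big]\Big\}(X^\vv_s)
=\nn\theta^{\ll,\mu}_s(X^\vv_s)\Big(\E\big[D^L(B_s+b_s)(y,\L_{X_s})(X_s)\,\nn(\theta^{\ll,\mu}_s)^{-1}(Y_s)\xi^\vv_s\big]+\tt\Xi^\vv_{(B+b)(y,\cdot)}(s)\Big)\Big|_{y=X^\vv_s},
\end{equation}
obtained by applying \eqref{t-XI} to $g=(B_s+b_s)(y,\cdot)$ with $y$ frozen and then setting $y=X^\vv_s$. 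Since $X^\vv_s\to X_s$, $\xi^\vv_s\to\nn_\eta Y_s$ and $\tt\Xi^\vv\to0$, the right-hand side converges to $F^\mu_s(X_s)\nn_\eta Y_s$, matching \eqref{FSM}. Thus $\zeta^\vv:=\xi^\vv-\nn_\eta Y$ solves a linear SDE whose source $R^\vv$ gathers (i) $\xi^\vv_0-[\nn\theta^{\ll,\mu}_0](X_0)\eta\to0$ in $L^2$, (ii) $\Xi^\vv_{\tt b},\Xi^\vv_{\tt\si}\to0$, and (iii) the difference of the right-hand side of \eqref{Gvv} and $F^\mu_s(X_s)\nn_\eta Y_s$, which I would split into the linear-in-$\zeta^\vv$ part $F^\mu_s(X_s)\zeta^\vv_s$ plus vanishing errors. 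Applying It\^o's formula to $|\zeta^\vv_t|^2$, taking expectations, using BDG for the supremum and the bound $|F^\mu_s(X_s)\zeta^\vv_s|\le C(\E|\zeta^\vv_s|^2)^{\ff12}$ to close the non-local term against $\E|\zeta^\vv_s|^2$, Gronwall's inequality gives $\E\sup_{t\le T}|\zeta^\vv_t|^2\le C\big(\E|\zeta^\vv_0|^2+\int_0^T\E|R^\vv_s|^2\d s\big)\to0$.

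The main obstacle is two-fold, both aspects stemming from the non-locality. First, because $F^\mu$ couples the equation to the law of its own solution, the whole comparison must be run at the level of expectations rather than pathwise, and both the uniqueness for \eqref{v-phi-eq} and the closure of the Gronwall estimate rest on the bound $|F^\mu_s(X_s)v|\le C(\E|v|^2)^{\ff12}$. Second, and more delicate, is showing that the errors in $R^\vv$ vanish in $L^2$: they are products of factors tending to $0$ (for instance $\nn\theta^{\ll,\mu}_s(X^\vv_s)-\nn\theta^{\ll,\mu}_s(X_s)$ or $\tt\Xi^\vv_{(B+b)(y,\cdot)}(s)\big|_{y=X^\vv_s}$) with $\xi^\vv_s$, which is bounded only in $L^2$. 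Here the $L^2$-uniform integrability of $\{\xi^\vv\}$ from Lemma \ref{Le-xi} is exactly what upgrades the a.s.\ (or in-probability) convergence of these products to $L^2$-convergence, by a Vitali-type argument; and the random evaluation $y=X^\vv_s$ in the chain-rule remainder of \eqref{Gvv} must be handled through the joint continuity of $D^L(B+b)$ in {\bf(H)}, giving control locally uniform in $y$.
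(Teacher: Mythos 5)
Your proposal is correct, and it deploys the right ingredients at the right places: Lemma \ref{Le-Xi} to kill the first-order remainders, Lemma \ref{Le-xi} to supply $L^2$-uniform integrability, the bound $|F_s^\mu(X_s)v|\le C(\E|v|^2)^{1/2}$ coming from {\bf (H)}, \eqref{u0} and Cauchy--Schwarz, and BDG plus Gronwall to close the estimates. The architecture, however, is genuinely different from the paper's. The paper never constructs the solution of \eqref{v-phi-eq} beforehand: it proves that $\{\xi^\vv\}$ is Cauchy in $L^2(\OO\to\C_T;\P)$, i.e.\ $\lim_{\vv,\dd\to0}\E\sup_{t\in[0,T]}|\xi_t^\vv-\xi_t^\dd|^2=0$, by comparing the two integral equations \eqref{x-e} for $\xi^\vv$ and $\xi^\dd$ and applying Gronwall to $\limsup_{\vv,\dd\downarrow0}\E\sup_{s\in[0,t]}|\xi_s^\vv-\xi_s^\dd|^2$ (whose a priori finiteness is precisely what Lemma \ref{Le-xi} provides); existence of the limit then comes from completeness, and \eqref{v-phi-eq} is identified a posteriori by letting $\vv\downarrow0$ in \eqref{x-e}. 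You instead solve \eqref{v-phi-eq} first by Picard iteration---legitimate, since the non-local term is Lipschitz with respect to the $L^2$-norm of its law argument---and then run a one-parameter Gronwall estimate on $\zeta^\vv=\xi^\vv-\nn_\eta Y$. Your route buys a self-contained proof of the uniqueness asserted in the lemma (which the paper claims but does not spell out) and replaces a two-parameter Cauchy comparison by a comparison against a fixed object; the paper's route is slightly shorter because the construction step is skipped, and both routes lean on Lemmas \ref{Le-xi} and \ref{Le-Xi} in exactly the same way. Two small points to tidy in a full write-up: (i) the ``linear-in-$\zeta^\vv$'' piece of the non-local difference is not literally $F_s^\mu(X_s)\zeta_s^\vv$, since the outer evaluation point and the factor $\nn\theta_s^{\ll,\mu}$ sit at $X_s^\vv$ rather than $X_s$, but it obeys the same bound $C(\E|\zeta_s^\vv|^2)^{1/2}$, which is all the Gronwall argument needs (the paper's own decomposition of $A_s^\vv$ has the identical wrinkle); (ii) for \eqref{ve} with general $p\ge1$ it suffices to treat $p\ge2$ via conditional BDG and then invoke conditional Jensen, the non-local term entering only through the deterministic bound $C(\E|\eta|^2)^{1/2}$, exactly as you indicate.
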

\begin{proof}
For the existence of $\nabla_\eta Y_t$   in $L^2(\Omega\to\C_T;\P)$,
it suffices to verify
\begin{align}\label{xi}
\lim\limits_{\vv,\delta\rightarrow0}
\E\Big(\sup\limits_{t\in[0,T]}|\xi^\vv_t-\xi^\delta_t|^2\Big)=0.
\end{align}
By \eqref{FSM}, (\ref{E-Y}) and (\ref{XI}), we obtain
\begin{equation}\label{x-e}
\xi^\vv_t=
 \frac{1}{\vv}(\theta^{\lambda,\mu}_0(X_0+\vv\eta)-\theta^{\lambda,\mu}_0(X_0))  +\int_0^t A_s^\vv\d s+ \int_0^t B_s^\vv\d W_s,\end{equation}
 where for $(\Xi,\tt\Xi)$ in \eqref{t-XI} and \eqref{XI},
 \begin{align*} &A_s^\vv:=  \frac{1}{\vv}\Big\{(\tilde{b}^\mu_s(Y_s^\vv)-\tilde{b}^\mu_s(Y_s))+[\nn\theta_s^{\lambda,\mu}[(B_s+b_s)(\cdot,\L_{\cdot})-B_s^\mu-b^\mu_s)]]
((\theta^{\lambda,\mu}_s)^{-1}(Y_s^\vv))\Big\}\\
&=\Xi^\vv_{\tilde{b}_s^\mu}(s)+\nn_{\xi^\vv_s}\tilde{b}^\mu_s(Y_s)+ F_s^\mu(X_s^\vv)\xi_s^\vv
  +\{\nn\theta_s^{\lambda,\mu}(X^\vv_s)\} \tt\Xi^\vv_{(B_s+b_s)(X_s^\vv,\cdot)}(s),\end{align*}
$$B_s^\vv:= \frac{1}{\vv}(\tilde{\sigma}^\mu_s(Y_s^\vv)-\tilde{\sigma}^\mu_s(Y_s))=\Xi^\vv_{\tilde{\sigma}^\mu_s}(s)+\nn_{\xi^\vv_s}\tilde{\sigma}^\mu_s(Y_s).
$$
Since  $\|\nn^2\theta_0^{\ll,\mu}\|_\infty<\infty$ due to \eqref{u0}, we have
$$\lim_{\vv,\dd\downarrow 0} \E\bigg|\frac{\theta^{\lambda,\mu}_0(X_0+\vv\eta)-\theta^{\lambda,\mu}_0(X_0)}{\vv} -\ff {\theta^{\lambda,\mu}_0(X_0+\dd\eta)-\theta^{\lambda,\mu}_0(X_0)} \dd\Big|^2=0.$$
Moreover, by Lemma \ref{Le-Xi}, {\bf (H)} and \eqref{u0} imply
\begin{align*}
&\lim_{\vv,\dd\to 0} \E\big\{|\tt{\Xi}^\vv_{(B_s+b_s)(X_s^\vv,\cdot)}(s)|^2+ |\Xi^\vv_{\tt \si^\mu_s}(s)|^2+|\Xi_{\tt b_s^\mu}^\vv(s)|^2\\
&\qquad\qquad\qquad+\tt{\Xi}^\dd_{(B_s+b_s)(X_s^\dd,\cdot)}(s)|^2+|\Xi^\dd_{\tt \si^\mu_s}(s)|^2+|\Xi_{\tt b_s^\mu}^\dd(s)|^2+|F_s^\mu(X_s^\vv)-F_s^\mu(X_s^\dd)|^2 \big\} =0.
\end{align*}
Combining these with \eqref{x-e} and applying BDG's inequality,  we find a constant $c>0$ such that
$$\limsup_{\vv,\dd\downarrow 0} \E \Big[\sup_{s\in [0,t]} |\xi_s^\vv-\xi_s^\dd|^2 \Big]\le c \int_0^t \limsup_{\vv,\dd\downarrow 0} \E
\Big[\sup_{s\in [0,r]} |\xi_s^\vv-\xi_s^\dd|^2\Big] \d r,\ \ t\in [0,T].$$
Since  Lemma \ref{Le-xi} ensures $$\limsup_{\vv,\dd\downarrow 0} \E \sup_{s\in [0,T]}\Big[ |\xi_s^\vv-\xi_s^\dd|^2 \Big]<\infty,$$
by Gronwall's lemma we prove  \eqref{xi}.

Finally, by {\bf (H)}, Lemma \ref{Le-Xi} and the definitions of $F_s^\mu, \nn_\eta Y_s, A_s^\vv,B_s^\vv$ and the fact
$X_s=(\theta_s^{\ll,\mu})^{-1}(Y_s),$ we may let $\vv\downarrow 0$ in \eqref{x-e} to derive \eqref{v-phi-eq}, which together with {\bf (H)} implies
$$\E|v_t^\eta|^2\le c_1 \E |\eta|^2,\ \ t\in [0,T].$$
Combining this with \eqref{v-phi-eq}, {\bf (H)} and BDG's inequality, we prove
\eqref{ve} for some constant $c>0$.
\end{proof}

\beg{proof}[Proof of Theorem $\ref{T3.1}(1)$] By $(\ref{v-phi})$, Lemma \ref{Le-Xi}  and
\eqref{u0}, we can find a constant $c>0$ such that
\begin{align*}
&\lim_{\vv\to0}\E\left( \sup_{t\in[0,T]}\left|\frac{1}{\vv}(X_t^\vv-X_t)-\{\nn(\theta_t^{\lambda,\mu})^{-1}\}
(Y_t)\nn_\eta Y_t\right|^2\right)\\
=&\lim_{\vv\to0}\E\left(\sup_{t\in[0,T]}\left|\frac{1}{\vv}
((\theta_t^{\lambda,\mu})^{-1} (Y_t^\vv)-(\theta_t^{\lambda,\mu})^{-1}(Y_t))
-\{\nn(\theta_t^{\lambda,\mu})^{-1}\}(Y_t)\nn_\eta Y_t\right|^2\right)\\
\leq&c\lim_{\vv\to0}\E\sup_{t\in[0,T]} |\xi_t^\varepsilon-\nn_\eta Y_t|^2=0.
\end{align*}
This implies
\beq\label{DXX}
\nn_\eta X_t:=\lim\limits_{\vv\rightarrow0}\frac{1}{\vv}(X_t^\vv-X_t)= \big\{\nn(\theta_t^{\lambda,\mu})^{-1}\big\}
(Y_t)\nn_\eta Y_t
\end{equation}
exists in $L^2(\Omega\to\C_T;\P)$,    together with \eqref{u0} and \eqref{ve}, yields (\ref{GE}).
\end{proof}

\subsection{Proof of \eqref{LH2}}

\begin{lem}\label{BYY}
Assume {\bf(H)}.  Let $k\in [0,T)$ and  $g\in C_b^1([0,T])$ with $g_k=0$ and $g_T=1$.  Then
 \beq\label{LH2YYY}\beg{split}
&\nn_\eta\E (f(Y_T)|\F_k):= \lim_{\vv\to 0}\frac{\E (f(Y^\vv_T)-  f(Y_T)|\F_k)}{\vv}\\
&=\mathbb{E}\bigg(f(Y_T)\int_k^T\<\zeta_t^\eta,\d W_t\>\bigg|\F_k\bigg),\ \ f\in\B_b(\R^d),\eta\in L^2(\OO\to\R^d,\F_0;\P),
\end{split}\end{equation}
where
\beq\label{t-Zeta}
\zeta_t^\eta
:=\sigma_t(X_t)^{-1}  \Big\{g_t' \nn_\eta X_t
+g_t \E[D^L(B_t+b_t)(y,\L_{X_t})(X_t)  \nn_\eta X_t]|_{y=X_t}\Big\}.
\end{equation}
Consequently, it holds
\beq\label{LH2YY}\beg{split}
&\nn_\eta\E (f(X_T)|\F_k):= \lim_{\vv\to 0}\frac{\E (f(X^\vv_T)-  f(X_T)|\F_k)}{\vv}\\
&=\mathbb{E}\bigg(f(X_T)\int_k^T\<\zeta_t^\eta,\d W_t\>\bigg|\F_k\bigg),\ \ \ \ f\in\B_b(\R^d),\eta\in L^2(\OO\to\R^d,\F_0;\P).
\end{split}\end{equation}

\end{lem}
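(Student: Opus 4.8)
The plan is to prove \eqref{LH2YYY} by a Bismut--Elworthy--Li integration by parts carried out on the Zvonkin-transformed equation \eqref{E-Y0}, and then to deduce \eqref{LH2YY} at no extra cost. Indeed, the terminal condition in \eqref{PDE} forces $u_T^{\ll,\mu}=0$, hence $\theta_T^{\ll,\mu}=\mathrm{Id}$, so that $Y_T=\theta_T^{\ll,\mu}(X_T)=X_T$ and $Y_T^\vv=X_T^\vv$; thus \eqref{LH2YY} is literally the case $f(Y_T)=f(X_T)$ of \eqref{LH2YYY}. I would first establish \eqref{LH2YYY} for $f\in C_b^1(\R^d)$ and then remove the smoothness, using that $\L_{Y_T}$ is absolutely continuous, a consequence of the nondegeneracy of $\tt\si^\mu$ ensured by {\bf(H)}.

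For $f\in C_b^1(\R^d)$, Lemma \ref{ti-v} and the chain rule give the derivative formula
$$\nn_\eta\E\big(f(Y_T)\,\big|\,\F_k\big)=\E\big(\<\nn f(Y_T),\nn_\eta Y_T\>\,\big|\,\F_k\big),$$
because $\vv^{-1}(Y_T^\vv-Y_T)\to\nn_\eta Y_T$ in $L^2$ and $\nn f$ is bounded. The core step is to convert the right-hand side into the stated weight. To this end I would introduce the control direction $w_t:=g_t\,\nn_\eta Y_t$ on $[k,T]$. Differentiating and inserting the linear equation \eqref{v-phi-eq} for $\nn_\eta Y_t$, together with the linearity of $\nn_\cdot\tt b_t^\mu(Y_t)$ and $\nn_\cdot\tt\si_t^\mu(Y_t)$ in the direction, shows that $w_t$ solves
$$\d w_t=\nn_{w_t}\tt b_t^\mu(Y_t)\,\d t+\nn_{w_t}\tt\si_t^\mu(Y_t)\,\d W_t+\tt\si_t^\mu(Y_t)\,h_t\,\d t,\quad w_k=g_k\nn_\eta Y_k=0,$$
where the forcing is chosen to be $h_t:=\tt\si_t^\mu(Y_t)^{-1}\big\{g_t'\,\nn_\eta Y_t+g_t\,F_t^\mu(X_t)\nn_\eta Y_t\big\}$ on $[k,T]$. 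Since $g_T=1$, this control produces $w_T=\nn_\eta Y_T$ exactly. But $w_t$ is precisely the first variation of \eqref{E-Y0} under the Cameron--Martin shift $W_\cdot\mapsto W_\cdot+\vv\int_k^\cdot h_s\,\d s$, so differentiating the Girsanov identity $\E(f(Y_T^{(\vv)})\,|\,\F_k)=\E(f(Y_T)\rho^\vv\,|\,\F_k)$, with $\rho^\vv:=\exp(\vv\int_k^T\<h_s,\d W_s\>-\tfrac{\vv^2}2\int_k^T|h_s|^2\d s)$, at $\vv=0$ yields
$$\E\big(\<\nn f(Y_T),\nn_\eta Y_T\>\,\big|\,\F_k\big)=\E\Big(f(Y_T)\int_k^T\<h_t,\d W_t\>\ \Big|\ \F_k\Big);$$
this is justified by the moment bound \eqref{ve}, which gives $\E\int_k^T|h_t|^2\d t<\8$.

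It then remains to identify $h_t$ with $\zeta_t^\eta$. Using $Y_t=\theta_t^{\ll,\mu}(X_t)$, the relation $\tt\si_t^\mu(Y_t)=\{(\nn\theta_t^{\ll,\mu})\si_t\}(X_t)$ from \eqref{t-b-sig}, the identity $\nn_\eta Y_t=(\nn\theta_t^{\ll,\mu})(X_t)\nn_\eta X_t$ read off from \eqref{DXX}, and the definition \eqref{FSM} of $F_t^\mu$, a direct computation cancels all Jacobian factors $\nn\theta_t^{\ll,\mu}$ and gives $h_t=\zeta_t^\eta$ as in \eqref{t-Zeta}. This is exactly where the Lions-derivative term of the weight originates: it is the distributional feedback term $F_t^\mu$, absent for classical SDEs, that the Cameron--Martin forcing must absorb. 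Finally, for general $f\in\B_b(\R^d)$ I would approximate by $f_n\in C_b^1$ with $\|f_n\|_\8\le\|f\|_\8$ and $f_n\to f$ a.e.; the right-hand side passes to the limit by dominated convergence since $\int_k^T\<\zeta_t^\eta,\d W_t\>\in L^2$ and $\L_{Y_T}$ is absolutely continuous, and a density argument (comparing the laws of $Y_T^\vv$ and $Y_T$, whose regular dependence on $\vv$ is supplied by the transformed, nondegenerate and Lipschitz equation \eqref{E-Y0}) shows the defining limit on the left converges to the same quantity.

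The step I expect to be most delicate is securing the integrability and regularity underlying the integration by parts in spite of the singularity of $b$: the forcing $h_t$ involves both $\nn_\eta Y_t$ and the distribution-dependent term $F_t^\mu$, and it is the distribution dependent Zvonkin transform that renders $\tt b^\mu,\tt\si^\mu$ Lipschitz with $\tt\si^\mu$ nondegenerate, turning \eqref{E-Y0} into a classical SDE on which the first-variation calculus and the Girsanov/Malliavin machinery are available, while \eqref{ve} furnishes the needed moments of $\zeta^\eta$. The genuinely new difficulty compared with the classical and regular cases is the presence of $F_t^\mu$; handling it correctly, both in the matching $w_t=g_t\nn_\eta Y_t$ and in the final identification $h_t=\zeta_t^\eta$, is the crux.
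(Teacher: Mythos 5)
Your construction mirrors the paper's proof almost step for step: the reduction of \eqref{LH2YY} to \eqref{LH2YYY} via $\theta_T^{\ll,\mu}=\mathrm{Id}$, the control process $w_t=g_t\nn_\eta Y_t$ with $w_k=0$, $w_T=\nn_\eta Y_T$, the derivation of its linear SDE from \eqref{v-phi-eq}, and the cancellation of the Jacobian factors $\nn\theta_t^{\ll,\mu}$ identifying the forcing with $\zeta_t^\eta$ are exactly what the paper does (there $w_t$ is called $v_t$, your forcing $h_t$ is $\zeta_t^\eta$ itself, and the paper's $h$ is its time integral).

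The divergence, and the genuine gap, is in how you perform the integration by parts. You differentiate the Girsanov identity $\E\big(f(Y_T^{(\vv)})\mid\F_k\big)=\E\big(f(Y_T)\rho^\vv\mid\F_k\big)$ at $\vv=0$ and assert this is justified by $\E\int_k^T|h_t|^2\,\d t<\infty$. That is not sufficient. For the identity to hold at all, the stochastic exponential $\rho^\vv$ must be a true martingale, which requires a Novikov-type condition $\E\exp\big(\tfrac{\vv^2}{2}\int_k^T|\zeta_t^\eta|^2\,\d t\big)<\infty$; but $\zeta_t^\eta$ contains $\nn_\eta X_t$ (equivalently $\nn_\eta Y_t$), which solves a linear SDE with bounded random coefficients and hence has all polynomial moments --- this is \eqref{ve} --- but generically \emph{no} exponential moments of its square, for any $\vv\neq 0$. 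So Novikov fails, no alternative proof of the martingale property is offered, and in addition passing $\d/\d\vv$ inside the conditional expectation would require uniform integrability of $\vv^{-1}(\rho^\vv-1)$, which you also do not establish. This is precisely why the paper routes the argument through Malliavin calculus instead: it identifies $w_T=D_hY_T$ by uniqueness of the linear SDE and invokes the duality $\E\big(D_hf(Y_T)\mid\F_k\big)=\E\big(f(Y_T)\int_k^T\<\zeta_t^\eta,\d W_t\>\mid\F_k\big)$ from \cite[Theorem 2.2.1, Lemma 1.2.1]{DN}, which is valid for adapted integrands that are merely square integrable and needs no exponential moments. Your argument can be repaired by truncating $h$ and passing to the limit, or simply by replacing the Girsanov step with this Malliavin integration by parts; as written, it does not go through. (A smaller remark: your extension from $C_b^1(\R^d)$ to $\B_b(\R^d)$ addresses convergence of the right-hand side, but the delicate point is the existence of the defining limit on the left for nonsmooth $f$; the paper defers exactly this to \cite[Page 4764]{RW}.)
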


\begin{proof}
Simply denote $v_t=g_t\nn_\eta Y_t$ for $t\in[0,T]$. By It\^o's formula, (\ref{v-phi-eq}) and \eqref{DXX}, we obtain
\beq\label{MD}\beg{split}
\d v_t=&\nn_{v_t}\tilde{b}^\mu_t(Y_t)\d t+\nn_{v_t}\tilde{\sigma}^\mu_t(Y_t)\,\d W_t
+g_t'v_t\d t\\
&+g_t\nn\theta_t^{\lambda,\mu}(X_t)
\E[D^L(B_t+b_t)(y,\L_{X_t})(X_t)\nn_\eta X_t ]|_{y=X_t}\d t,\ \
 v_k=0, t\in[k,T].
\end{split}\end{equation}
On the other hand, let
 $h_t=\int_{k}^t\zeta_s^\eta\d s$ for $t\in [k,T]$.
By (\ref{g-tb})    and \cite[Theorem 2.2.1]{DN}, the Malliavin derivative $D_hY_t$ of $Y_t$ along $h$ satisfies
$$
\d D_h Y_t
=\nn_{D_hY_t}\tilde{b}^\mu_t(Y_t)\d t+\nn_{D_h Y_t}\tilde{\sigma}^\mu_t(Y_t)\,\d W_t
+\tilde{\sigma}^\mu_t(Y_t)\,\d h_t,
\ \  D_h Y_k=0, t\in [k,T].$$
By  the definition of $h$ we see that $D_h Y_t$ solves  \eqref{MD}, so that the uniqueness implies
 $v_t=D_h Y_t$. In particular, $\nn_\eta Y_T=v_T^\eta= D_hY_T$.
Thus, for any $f\in C_b^1(\mathbb{R}^d)$, by   the dominated convergence theorem due to \eqref{ve}, and the integration by parts formula for Malliavin derivative (\cite[Lemma 1.2.1]{DN}), we  obtain
 \begin{align*}
\nn_\eta\E( f(Y_T)|\F_k)
&=\E(\nn_{v_T^\eta} f(Y_T)|\F_k)\\
&=\E(\nn_{D_h Y_T}f(Y_T)|\F_k)\\
&=\E (D_hf(Y_T)|\F_k)
=\E\Big(f(Y_T)\!\!\int_k^T\!\!\<\zeta_t^\eta,\d W_t\>|\F_k\Big).
\end{align*} So, \eqref{LH2YYY} holds for $f\in C_b^1(\R^d).$
By an approximation argument (see \cite[Page 4764]{RW}), the formula also holds for $f\in\B_b(\R^d)$.
Since $\theta_T^{\ll, \mu}=\mathrm{Id}$, we have $(X_T,X_T^\vv)= (Y_T,Y_T^\vv)$ so that this implies \eqref{LH2YY}.
\end{proof}

\begin{proof}[Proof of \eqref{LH2}] Let $\eta=\phi(X_0)$. We have
$$ \L_{X_0^\vv}=\L_{X_0+\vv\phi(X_0)}=\mu\circ(\mathrm{Id}+\vv\phi)^{-1},\ \ \vv\in [0,1].$$
Moreover,     (\ref{t-Zeta}) with $\eta=\phi(X_0)$ implies  $\zeta^\eta_t=\zeta_t^\phi$ for $\zeta_t^\phi$ in \eqref{LH2}. So,
letting $k=0$ in \eqref{LH2YY} and taking expectation on both sides, we prove \eqref{LH2}.
 \end{proof}

\subsection{Proof of Theorem \ref{T3.1}(3)}
Let  $u^{\ll,\hat\mu}$ solve \eqref{PDE} for  $\hat\mu\in \C_{T,\scr P_2}$.
We first  characterize the Lipschitz continuity of $u^{\lambda,\mu}$ in $\mu$.

\begin{lem}\label{um}
Assume {\bf(H)} and let $T>0$. There exists a constant $c>0$ such that for any ${\hat\mu},{\hat\nu}\in C([0,T];\scr P_2)$,
\begin{align}\label{gth-1}
\|u_s^{\lambda,{\hat\mu}}-u_s^{\lambda,{\hat\nu}}\|_{\infty}+
\|\nabla u_s^{\lambda,{\hat\mu}}-\nabla u_s^{\lambda,{\hat\nu}}\|_{\infty}
\leq c\int_s^T \ff {\e^{-\ll (t-s)}}{\ss{t-s}}\W_2({\hat\mu}_t,{\hat\nu}_t)\d t,\ \ s\in [0,T].
\end{align}
 \end{lem}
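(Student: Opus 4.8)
The plan is to control $w_s:=u_s^{\ll,\hat\mu}-u_s^{\ll,\hat\nu}$ by deriving the linear PDE it solves and then representing it through the diffusion semigroup generated by the operator with the parameter frozen at $\hat\mu$. First I would subtract \eqref{PDE} for $\hat\nu$ from the one for $\hat\mu$. Writing $L_t^{\hat\mu}:=\ff12\mathrm{Tr}(\si_t\si_t^\ast\nn^2)+\nn_{b_t^{\hat\mu}+B_t^{\hat\mu}}$ and using the identity $\nn_{b^{\hat\mu}+B^{\hat\mu}}u^{\ll,\hat\mu}-\nn_{b^{\hat\nu}+B^{\hat\nu}}u^{\ll,\hat\nu}=\nn_{b^{\hat\mu}+B^{\hat\mu}}w+\nn_{(b^{\hat\mu}-b^{\hat\nu})+(B^{\hat\mu}-B^{\hat\nu})}u^{\ll,\hat\nu}$, one finds that $w$ solves
\[
\pp_t w_t+L_t^{\hat\mu}w_t-\ll w_t=-\Phi_t,\qquad w_T=0,
\]
with source $\Phi_t:=(b_t^{\hat\mu}-b_t^{\hat\nu})+\nn_{(b_t^{\hat\mu}-b_t^{\hat\nu})+(B_t^{\hat\mu}-B_t^{\hat\nu})}u_t^{\ll,\hat\nu}$. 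The crucial structural feature is that $\Phi_t$ is merely bounded in the space variable (since $b_t^{\hat\mu}-b_t^{\hat\nu}$ is only continuous), so the gradient of $w$ cannot be obtained by differentiating $\Phi$ and must instead come from the smoothing of the semigroup.

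Second I would bound the source uniformly. Applying the chain-rule estimate \eqref{LPS} to $b_t(x,\cdot)$ and $B_t(x,\cdot)$ for each fixed $(t,x)$, together with the uniform bounds $\|D^Lb\|_{T,\infty},\|D^LB\|_{T,\infty}<\infty$ from {\bf (H)}, gives $\|b_t^{\hat\mu}-b_t^{\hat\nu}\|_\infty+\|B_t^{\hat\mu}-B_t^{\hat\nu}\|_\infty\le c\,\W_2(\hat\mu_t,\hat\nu_t)$. Combining this with $\|\nn u_t^{\ll,\hat\nu}\|_\infty\le\ff15$ from \eqref{u0} yields $\|\Phi_t\|_\infty\le c_0\,\W_2(\hat\mu_t,\hat\nu_t)$ for a constant $c_0$ depending only on the bounds in {\bf (H)}, uniformly in $\hat\mu,\hat\nu\in\C_{T,\scr P_2}$.

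Third, let $P_{s,t}^{\hat\mu}$ be the inhomogeneous diffusion semigroup associated with $L_t^{\hat\mu}$, i.e. $P_{s,t}^{\hat\mu}\Phi_t(x)=\E\Phi_t(X_t^{s,x})$ for the solution of $\d X_t=(b_t^{\hat\mu}+B_t^{\hat\mu})(X_t)\d t+\si_t(X_t)\d W_t$ started at $x$ at time $s$, which is well posed under {\bf (H)} by Zvonkin's transform exactly as for \eqref{PDE}. A Feynman--Kac computation (applying It\^o to $\e^{-\ll(t-s)}w_t(X_t^{s,x})$) gives the representation $w_s=\int_s^T\e^{-\ll(t-s)}P_{s,t}^{\hat\mu}\Phi_t\,\d t$. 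I would then use the contraction bound $\|P_{s,t}^{\hat\mu}\Phi_t\|_\infty\le\|\Phi_t\|_\infty$ for the sup-norm part and the Bismut-type gradient estimate $\|\nn P_{s,t}^{\hat\mu}\Phi_t\|_\infty\le C(t-s)^{-\ff12}\|\Phi_t\|_\infty$ for the gradient part; since $(t-s)^{-\ff12}\ge T^{-\ff12}$ on $[s,T]$, the sup-norm bound is absorbed into the same integral, and inserting $\|\Phi_t\|_\infty\le c_0\W_2(\hat\mu_t,\hat\nu_t)$ produces exactly \eqref{gth-1}.

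The hard part is the Bismut-type gradient estimate $\|\nn P_{s,t}^{\hat\mu}g\|_\infty\le C(t-s)^{-\ff12}\|g\|_\infty$ with a constant $C$ independent of $\hat\mu$. This is the singular-drift gradient estimate established in \cite{XXZZ} via Zvonkin's transform, where $C$ depends only on $T$, the ellipticity of $\si\si^\ast$, and the Dini modulus $[b]_\varphi$ together with the sup bounds on the coefficients; under {\bf (H)} all of these are uniform in the frozen parameter $\hat\mu$, and it is precisely this uniformity that makes the constant $c$ in \eqref{gth-1} independent of $\hat\mu,\hat\nu$.
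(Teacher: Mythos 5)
Your proposal is correct and follows essentially the same route as the paper: both represent the difference via a Duhamel/Feynman--Kac formula over the semigroup of the SDE with the measure argument frozen, bound the source term by $c\,\W_2(\hat\mu_t,\hat\nu_t)$ using \eqref{LPS}, {\bf (H)} and $\|\nn u^{\ll,\hat\nu}\|_{T,\infty}\le \ff{1}{5}$ from \eqref{u0}, and conclude with the $(t-s)^{-1/2}$ gradient estimate for that frozen semigroup. The only differences are cosmetic: the paper freezes at $\hat\nu$ rather than $\hat\mu$, and it obtains the gradient estimate self-containedly by applying its own bound \eqref{A1} (already proved at that stage) with $B+b\equiv B^{\hat\nu}+b^{\hat\nu}$ and $\mu=\dd_x$, whereas you import the same estimate from \cite{XXZZ}.
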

\begin{proof}
 Let $P_{s,t}^{\hat\nu}$ be the Markov  semigroup associated with   the SDE:
$$\d X_{s,t}=\{B_t^{\hat\nu}+b_t^{\hat\nu}\}(X_{s,t})\d t+\sigma_t(X_{s,t})\d W_t, \ \ t\geq s\geq 0.$$
By \eqref{A1} with $B+b\equiv B^{\hat\nu}+b^{\hat\nu}$ and $\mu=\dd_x$ for $ x\in \R^d$, we find a constant $c_1>0$ such that
\beq\label{*02} \|\nn P_{s,t}^{\hat\nu} f\|_\infty\le \ff{c_1}{\ss{t-s}} \|f\|_\infty,\ \ f\in \B_b(\R^d), 0\le s <t\le T.\end{equation}
Next, by Duhamel's formula, the unique solution to \eqref{PDE} satisfies
$$
u_s^{\lambda,{\hat\mu}}=\int_s^T\e^{-\lambda(t-s)}P^{\hat\nu}_{s,t}(\nabla _{B_t^{\hat\mu}+b_t^{{\hat\mu}}-B_t^{\hat\nu}-b_t^{\hat\nu}}u_t^{\lambda,{\hat\mu}}+b_t^{\hat\mu})\d t,\ \ {\hat\mu}\in \C_{T,\scr P_2}.
$$
Moreover, by \eqref{LPS} and {\bf (H)}, we find a constant $c>0$ such that
$$\|B_t^{\hat\mu}+b_t^{{\hat\mu}}-B_t^{\hat\nu}-b_t^{\hat\nu}\|_\infty \le c \W_2(\hat\mu_t,\hat\nu_t),\ \ t\in [0,T].$$
Combining these with   \eqref{u0}  and \eqref{*02}, we find a constant $c_2>0$ such that
\begin{align*}
|u_s^{\lambda,{\hat\mu}}-u_s^{\lambda,{\hat\nu}}|
 &\le\int_s^T\big|\e^{-\lambda(t-s)}P^{\hat{\nu}}_{s,t}(\nabla _{B_t^{\hat\mu}+b_t^{{\hat\mu}}-B_t^{\hat\nu}-b_t^{\hat\nu}}u_t^{\lambda,{\hat\nu}}+b_t^{\hat\mu}-b_t^{\hat\nu})\big|\d t \\
 &\le c_2  \int_s^T\e^{-\lambda(t-s)} \W_2({\hat\mu}_t,{\hat\nu}_t)\d t,\\
|\nabla u_s^{\lambda,{\hat\mu}}-\nabla u_s^{\lambda,{\hat\nu}}|
\leq&  \int_s^T\left| \e^{-\lambda(t-s)} \nabla P^{\hat{\nu}}_{s,t} \big(\nabla _{B_t^{\hat\mu}+b_t^{{\hat\mu}}-B_t^{\hat\nu}-b_t^{\hat\nu}}u_t^{\lambda,{\hat\nu}}+b_t^{\hat\mu}-b_t^{\hat\nu}\big)\right|\d t\\
\leq&c_2  \int_s^T\frac{\e^{-\lambda(t-s)}}{\sqrt{t-s}} \W_2({\hat\mu}_t,{\hat\nu}_t)\d t,\ \ t\in [0,T].
\end{align*} Therefore, \eqref{gth-1} holds for some constant $c>0$.
\end{proof}

For $r\in[0,1], \mu\in \scr P_2, X_0\in L^2(\OO\to\R^d,\F_0;\P)$ and $\phi\in L^2(\R^d\to\R^d;\mu)$, let $X^{r}_t$ be the solution of (\ref{E1}) with initial value $X_0^{r}=X_0+r\phi(X_0)$, and denote
\beq\label{UUU}\mu_t^{r}:=\L_{X_t^{r}},\ \ t\in [0,T].\end{equation} We have
\beq\label{UR0} \mu_t^0=\L_{X_t},\ \ \mu_0^r= \L_{X_0+r\phi(X_0)}=\mu\circ(\mathrm{Id}+r\phi)^{-1},\ \ r\in[0,1], t\in [0,T].\end{equation}
 Let $\theta^{\ll,\mu^{r}}_t:=\mathrm{Id} + u_t^{\ll,\mu^{r}}$ for $u_t^{\ll,\mu^{r}}$ solving \eqref{PDE} with $ \mu^{r}$ replacing $\hat\mu$, and let
\beq\label{YYY}  Y^{r,\vv}_t:=\theta^{\lambda,\mu^{r}}_t(X^{r+\vv}_t),\ \ r,\vv\in [0,1], t\in [0,T].\end{equation}
Then
\beq\label{YXX} Y_t:= Y_t^{0,0}=\theta_t^{\ll,\mu}(X_t),\ \  \tilde{Y}_t^r:=Y^{r,0}_t=\theta^{\lambda,\mu^{r}}_t(X^{r}_t),\ \ r\in [0,1],t\in [0,T].\end{equation}
By \eqref{v-phi-eq} and \eqref{DXX} for the solution to \eqref{E1} with initial value $X_0+r\phi(X_0)$ and $\eta=\phi(X_0)$, we obtain
 \beq\label{DXX0} \nn_{\phi(X_0)} X_t^r = (\nn \theta_t^{\ll,\mu^r})^{-1}(X_t^r) v_t^{\phi,r},\ \ v_t^{\phi,r}:= \nn_{\phi(X_0)} \tilde{Y}_t^r,\end{equation}
 \beq\label{DXX01} \beg{split}
v_t^{\phi,r}
=&\,[\nn\theta^{\lambda,\mu^r}_0](X_0+r\phi(X_0))\phi(X_0)+\int_0^t\nn_{v_s^{\phi,r}}\tilde{\sigma}^{\mu^r}_s(\tilde{Y}_s^r)\,\d W_s\\
&+\int_0^t\Big\{\nn_{v_s^{\phi,r}}\tilde{b}^{\mu^r}_s(\tilde{Y}_s^r)+ F_s^{\mu^r}\big(X_s^r\big)v_s^{\phi,r} \Big\}\d s, \ \ t\in [0,T],\ r\in [0,1],\end{split}\end{equation}
where $F^{\mu^r}$ is defined in \eqref{FSM} with $\mu^r$ replacing $\mu$.
\beg{lem}\label{LN3}  Assume {\bf (H)}.
\beg{enumerate} \item[$(1)$]
 There exists a constant $c>0$ such that
\beq\label{Wp}  \sup_{r\in [0,1],t\in [0,T]} \W_2(\mu^{r}_t,\mu_t) \le    c   \|\phi\|_{L^2(\mu)},\ \ \phi\in L^2(\R^d\to\R^d;\mu).\end{equation}
Moreover, for any $p\ge 2$ there exists a constant $c(p)>0$ such that
\beq\label{CPP} \beg{split} & \sup_{r\in[0,1]}\E\Big(\sup_{t\in [0,T]}\big(|\tilde{Y}_t^r-Y_t|^p+|v_t^{\phi,r}|^p+|X_t^r-X_t|^p+|\nn_{\phi(X_0)}X_t^r|^p\big)\Big|\F_0\Big)\\
&\le c(p) \big( |\phi(X_0)|^p+\|\phi\|_{L^2(\mu)}^p\big),\ \ \phi\in L^2(\R^d\to\R^d;\mu).\end{split}\end{equation}
\item[$(2)$] Let $k\in[0,T)$ and $g\in C_b^1([0,T])$ with $g_k=0$ and $g_T=1$. For any $f\in \B_b(\R^d)$ and $\phi\in L^2(\R^d\to\R^d;\mu)$,
%\beq\label{Wp2} \ff{\d}{\d r} (P_T f)(\mu\circ(id +r\phi)^{-1})= \E\bigg(f (X_T^r) \int_0^T \<\zeta_t^{\phi,r}, \d W_t\>\bigg),\ \ r\in [0,1] \end{equation}holds for
\beq\label{Wp2} \begin{split}\ff{\d}{\d r} \E (f(X_T^r)|\F_k)&=\lim_{\varepsilon\to 0}\frac{\E (f(X_T^{r+\varepsilon})-f(X_T^r)|\F_k)}{\varepsilon}\\
 &=\E\bigg(f (X_T^r) \int_k^T \<\zeta_t^{\phi,r}, \d W_t\>|\F_k\bigg),\ \ r\in [0,1] \end{split}\end{equation}holds for
\begin{align}\label{ZEpr}
\zeta_t^{\phi,r}= N_t(X_t^r) \nn_{\phi(X_0)}X_t^r,\ \ r\in [0,1], t\in [0,T],
\end{align}where for   $X,v\in L^2(\OO\to\R^d;\P)$,
\beq\label{NTT}
 N_t(X)v:= \si_t(X)^{-1}\Big\{ g_t' v+ g_t\E D^L (B_t+b_t)(y,\L_{X})(X)  v \big|_{y=X}\Big\},\ \ t\in [0,T].\end{equation}
Consequently, it holds
 \beq\label{Wp222} \begin{split}\ff{\d}{\d r} P_T f(\mu\circ (\mathrm{Id }+r\phi)^{-1})
 &=\E\bigg(f (X_T^r) \int_0^T \<\zeta_t^{\phi,r}, \d W_t\>\bigg),\ \ r\in [0,1]. \end{split}\end{equation}
\end{enumerate}
\end{lem}
\beg{proof}  (1) Recall that $Y_t^r=\theta_t^{\lambda,\mu}(X_t^r)$. Since $\W_2(\mu_t^r,\mu_t)^2\le \E|X_t^r-X_t|^2,$ \eqref{Wp} follows from \eqref{p2} and \eqref{u0}.
To prove \eqref{CPP}, let $\eta=\phi(X_0)$ and denote $\mu=\L_{X_0}$. By \eqref{YYY} for $\vv=0$, we have
\beq\label{*P*}X_t^{r}= (\theta_t^{\ll,\mu^{r}})^{-1}(\tilde{Y}_t^{r}),\ \ r\in [0,1], t\in [0,T],\end{equation} and by \eqref{xi-vv},
$$\vv \xi_t^\vv= Y_t^{\vv}-Y_t,\ \ t\in [0, T], \vv\in [0,1].$$
Then \eqref{xi-vv}, \eqref{gth-1}, \eqref{Wp}, \eqref{DXX01} and Lemma \ref{ti-v} with $\tilde{Y}_t^r$, $\mu^r$ replacing $Y_t$, $\mu$ respectively imply
\begin{align*}&\E\Big(\sup_{t\in [0,T]} \big\{|\tilde{Y}_t^r-Y_t|^p+|v_t^{\phi,r}|^p\big\}\Big|\F_0\Big)\\
&\leq c(p)\E\Big(\sup_{t\in [0,T]} \big\{|\tilde{Y}_t^r-Y_t^r|^p+|Y_t^r-Y_t|^p+|v_t^{\phi,r}|^p\big\}\Big|\F_0\Big)\\
&\le c(p) \big(|\phi(X_0)|^p+\|\phi\|_{L^2(\mu)}^p\big)
 \end{align*}
 for some constant $c(p)>0$. By \eqref{u0} and \eqref{*P*}, this implies \eqref{CPP}.

 (2)  By  \eqref{YYY} and  Lemma \ref{BYY} for $\tilde{Y}_t^r$ replacing $Y_t$ and $\mu^r$ replacing $\mu$, we obtain
\begin{equation}\label{D-X-s}\beg{split}
&\nabla_{\phi(X_0)} \E (f(\tilde{Y}^{r}_T)|\F_k):=\lim_{\vv\downarrow 0}\frac{\E (f(Y^{r,\vv}_T)-\E f(\tilde{Y}^{r}_T)|\F_k)}{\vv}\\
&=\E\left(f(\tilde{Y}^{r}_T)\int_k^T\<\zeta^{\phi,r}_t,\d W_t\>|\F_k\right),\ \ f\in\scr B_b(\R^d), r\in [0,1],k\in[0,T).
\end{split}\end{equation}
Combining this with \eqref{UUU}, \eqref{YYY} and \eqref{YXX}, we derive
\beg{align*} \ff{\d}{\d r} \E (f(X_T^r)|\F_k)
&= \lim_{\vv\downarrow 0}\ff{\E (f (X_T^{r+\vv}) - \E f  (X_T^{r})|\F_k)}\vv \\
&= \lim_{\vv\downarrow 0}\ff{\E ((f\circ(\theta_{T}^{\ll,\mu^r})^{-1})(Y_T^{r,\vv}) - \E (f\circ(\theta_{T}^{\ll,\mu^r})^{-1})(\tilde{Y}_T^{r})|\F_k)}\vv\\
&= \E\left(f(X^{r}_T)\int_k^T\<\zeta^{\phi,r}_t,\d W_t\>|\F_k\right).\end{align*}
In particular, for $k=0$, taking expectation on both sides of \eqref{Wp2}, we get \eqref{Wp222}.
Then the proof is finished.\end{proof}

Having  Lemmas \ref{um} and \ref{LN3} in hands, we  prove the $L$-differentiability of $P_Tf$ as follows by  modifying step (c) in the proof of \cite[Theorem 2.1]{RW}.

\beg{proof}[Proof of Theorem $\ref{T3.1}(3)$]  Let $\{\zeta_t^{\phi,r}\}_{r\in [0,1]}$ be in   Lemma \ref{LN3}. By {\bf (H)}, \eqref{CPP} and the Riesz representation theorem, there exists $\gg\in L^2(\R^d\to\R^d;\mu)$
such that
$$  \<\gg,\phi\>_{L^2(\mu)}= \E \bigg(f(X_T)\int_0^T \<\zeta_t^{\phi,0}, \d W_t\>\bigg),\ \ \phi\in L^2(\R^d\to\R^d;\mu).$$
By \eqref{Wp222} and \eqref{UR0}, we obtain
\beg{align*} &\ff{|P_Tf(\mu\circ(\mathrm{Id}+\phi)^{-1})- P_T f(\mu)-\<\gg, \phi\>_{L^2(\mu)}|}{\|\phi\|_{L^2(\mu)}} \\
&\le  \ff{1}{\|\phi\|_{L^2(\mu)} } \int_0^1\bigg|\E\left(f(X_T^r)\int_0^T\<\zeta_t^{\phi,r},\d W_t\>- f(X_T)\int_0^T\<\zeta_t^{\phi,0},\d W_t\>\right)\bigg|\d r\\
&\le \vv_1(\phi)+\vv_2(\phi)+\vv_3(\phi),\end{align*}
where, by \eqref{ZEpr} and \eqref{DXX0}
\beg{align*} &\vv_1(\phi):= \ff {1}{\|\phi\|_{L^2(\mu)}}  \int_0^1   \bigg|\E\bigg(\big(f(X_T^{r})-f(X_T)\big)\int_0^T \<  \zeta_t^{\phi,0}, \d W_t\>\bigg)\bigg|\d r,\\
&\vv_2(\phi):= \ff {\|f\|_\infty}{\|\phi\|_{L^2(\mu)}}  \int_0^1 \E \bigg|   \int_0^T \big\<\{N_t(X_t^{r})  - N_t(X_t)\}(\nn \theta_t^{\ll,\mu})^{-1}(X_t) v_t^\phi, \d W_t\big\>\bigg|\d r,\\
&\vv_3(\phi):= \ff {\|f\|_\infty}{\|\phi\|_{L^2(\mu)}}  \int_0^1 \E \bigg|   \int_0^T
\big\<N_t(X_t^{r}) \big\{(\nabla \theta_t^{\ll,\mu^r})^{-1}(X_t^r)v_t^{\phi,r} - (\nabla\theta_t^{\ll,\mu})^{-1}(X_t)v_t^\phi\big\}, \d W_t\big\>\bigg|\d r.
 \end{align*}
 So, it suffices to prove
$$\lim_{\|\phi\|_{L^2(\mu)}\to 0} \big\{\vv_1(\phi)+\vv_2(\phi)+\vv_3(\phi)\big\}=0.$$

a) We  first modify  the  proof of \cite[(2.3)]{RW}   to verify
\beq\label{WP*}\lim_{\|\phi\|_{L^2(\mu)}\to 0}  \vv_1(\phi)=0.\end{equation}
Denote
$$I_{k} := \int_0^k \big\<\zeta_t^{\phi,0},\d W_t\big\>,\ \    I_k^{\phi,r}=\big| \E[ I_k \{f(X_T^{r}) -f(X_T)\}]\big|, \ \
k\in (0,T), r\in (0,1].$$
By \eqref{Wp2} with $ g_t:=\ff{t-k}{T-k}$ for $t\in [k,T]$, {\bf (H)},  \eqref{DXX} and \eqref{CPP},  and noting that
$I_k$ is $\F_k$-measurable,
we   find     constants $c_1,c_2>0$ such that
\beg{align*} &I_k^{\phi,r} = \bigg|\E\bigg[I_k \int_0^r  \frac{\d}{\d\varepsilon}\E( f(X_T^{\vv})|\F_k)\d \vv\bigg]\bigg| \\
&\le \E\bigg[|I_k|\cdot\bigg|\int_0^r \E\bigg(f (X_T^\varepsilon) \int_k^T \<\zeta_t^{\phi,\varepsilon}, \d W_t\>\bigg|\F_k\bigg)\d \vv\bigg|\bigg]\\
&\le   c_1 \|f\|_\infty\int_0^r \E\bigg[|I_k|\bigg(\int_k^T \big\{\big| \nn_{\phi(X_0)}X_t^\vv\big|^2+\E|\nn_{\phi(X_0)}X_t^\vv|^2\big\} \d t\bigg)^{\ff 1 2}\bigg]\d \vv\\
&\le c_2 \|f\|_\infty \|\phi\|_{L^2(\mu)}^2,\ \ k\in [0,T),  \ f\in \B_b(\R^d).\end{align*}
So,
$$\lim_{\|\phi\|_{L^2(\mu)}\to 0} \ff 1 {\|\phi\|_{L^2(\mu)}} \int_0^1 I_k^{\phi,r}\d r=0,\ \ k\in (0,T).
$$
Combining this with  \eqref{CPP} and \eqref{ZEpr}, we obtain
  \beg{align*} & \lim_{\|\phi\|_{L^2(\mu)}\to 0} \vv_1(\phi)\le
  \lim_{\|\phi\|_{L^2(\mu)}\to 0}\ff {1}{\|\phi\|_{L^2(\mu)}}  \int_0^1   \bigg|\E\bigg(\big(f(X_T^{r})-f(X_T)\big)\int_k^T \<  \zeta_t^{\phi,0}, \d W_t\>\bigg)\bigg|\d r \\
  & \le c \|f\|_\infty\ss{T-k},\ \ k\in (0,T) \end{align*} for some constant $c>0$.
  By letting $k\uparrow T$ we prove \eqref{WP*}.

b) For any $\phi\in L^2(\R^d\to\R^d;\mu)$, $s\in [0,T]$ and $r\in [0,1],$ let
 $$h_{s,r}(\phi):= \big(\E|D^L(B_s+b_s)(y,\L_{X_s^r})(X_s^r)-D^L(B_s+b_s)(z,\L_{X_s})(X_s)|^2\big)^{\frac{1}{2}}\big|_{(y,z)=(X_s^r,X_s)}.$$
Noting that $h_{s,r}(\phi)^2<4(\|D^LB\|^2+\|D^Lb\|^2)_{T,\infty}$ due to {\bf (H)}, by {\bf (C)}, \eqref{CPP} and the dominated convergence theorem, we obtain
\beq\label{FSM''} \lim_{\|\phi\|_{L^2(\mu)}\to 0} h_{s,r}(\phi)=0,\ \ \sup_{(s,r)\in [0,T]\times [0,1]}\sup_{\|\phi\|_{L^2(\mu)}\le 1} h_{s,r}(\phi)^2<\infty.\end{equation}
Moreover, by    {\bf (H)} and \eqref{NTT}, we find a  constant $c_1>0$ such that
\beq\label{FSM'}\begin{split} &\|N_s(X_s^r)v- N_s(X_s)v\|\\
&\le c_1  \big\{|X_s^r-X_s|( |v|+(\E|v|^2)^{\frac{1}{2}}) + h_{s,r}(\phi) (\E|v|^2)^{\ff 1 2}\big\},
\ \ v\in L^2(\OO\to\R^d;\P).\end{split}\end{equation}
Combining \eqref{CPP}, \eqref{FSM''} and \eqref{FSM'}, we may find a constant $c_2>0$ such that   the dominated convergence theorem yields
\beg{align*} &\limsup_{\|\phi\|_{L^2(\mu)}\to 0} \vv_2(\phi) \le \limsup_{\|\phi\|_{L^2(\mu)}\to 0} \ff{c_2}{\|\phi\|_{L^2(\mu)}}
\int_0^1  \E\bigg(\int_0^T \big\{(|v_s^\phi|^2+\|\phi\|_{L^2(\mu)}^2)\cdot|X_s^r-X_s|^2\\
&\qquad\qquad\qquad\qquad\qquad\qquad\qquad\qquad\qquad\qquad\qquad+h_{s,r}(\phi)^2\|\phi\|_{L^2(\mu)}^2\big\}\d s \bigg)^{\ff 1 2}\d r\\
&\le \limsup_{\|\phi\|_{L^2(\mu)}\to 0}  \ff{c_2}{\|\phi\|_{L^2(\mu)}} \int_0^1\bigg\{ \Big(\E\Big[\sup_{s\in [0,T]}|v_s^\phi|^2\Big]+\|\phi\|_{L^2(\mu)}^2\Big)^{\ff 1 2}
\bigg(\E\int_0^T |X_s^r-X_s|^2\d s\bigg)^{\ff 1 2}\\
&\qquad\qquad\qquad \qquad\qquad\qquad \qquad\qquad\qquad  +\|\phi\|_{L^2(\mu)}\bigg(\int_0^T\E h_{s,r}(\phi)^2\d s\bigg)^{\ff 1 2}\bigg\}\d r=0.\end{align*}

c) It remains to prove
\beq\label{VV3} \lim_{\|\phi\|_{L^2(\mu)}\to 0}  \vv_3(\phi)=0.\end{equation}
Noting that
\begin{align*}
&|N_t(X_t^{r}) \big\{(\nabla \theta_t^{\ll,\mu^r})^{-1}(X_t^r)v_t^{\phi,r} - (\nabla\theta_t^{\ll,\mu})^{-1}(X_t)v_t^\phi\big\}|\\
&\leq c[\|(\nabla \theta_t^{\ll,\mu^r})^{-1}(X_t^r)-(\nabla\theta_t^{\ll,\mu})^{-1}(X_t)\||v_t^\phi|+|v_t^{\phi,r}-v_t^\phi|]\\
&+c\E [|D^L (B_t+b_t)(y,\L_{X_t^r})(X_t^r)|[\|(\nabla \theta_t^{\ll,\mu^r})^{-1}(X_t^r)-(\nabla\theta_t^{\ll,\mu})^{-1}(X_t)\|]|v_t^\phi|]  \big|_{y=X_t^r}\\
&+c\E [|D^L (B_t+b_t)(y,\L_{X_t^r})(X_t^r)||v_t^{\phi,r}-v_t^\phi|]  \big|_{y=X_t^r},
\end{align*}
according to BDG's inequality, \eqref{CPP} and \eqref{u0}, it suffices to prove
\begin{align*}
&\lim_{\|\phi\|_{L^2(\mu)}\to 0}\frac{\E\sup_{t\in[0,T]}\left\{\E \left[(1+|D^L (B+b)_t(y,\L_{X_t^r})(X_t^r)|)\sup_{t\in[0,T]}|v_t^{\phi,r}-v_t^\phi|\right]  \big|_{y=X_t^r}\right\}}{\|\phi\|_{L^2(\mu)}}\\
&=0.
\end{align*} By {\bf (C)}, this is implied by
\begin{align}\label{nee}
&\lim_{\|\phi\|_{L^2(\mu)}\to 0}\frac{\left(\E\sup_{t\in[0,T]}|v_t^{\phi,r}-v_t^\phi|^{\frac{p}{p-1}}\right)^{\frac{p-1}{p}} }{\|\phi\|_{L^2(\mu)}}=0.
\end{align} To prove \eqref{nee}, we observe that
\beg{align*}&|F_s^{\mu^r}(X_s^r)v_s^{\phi,r}- F_s^{\mu}(X_s)v_s^\phi|\\
&\leq |F_s^{\mu^r}(X_s^r)v_s^{\phi,r}- F_s^{\mu}(X_s^r)v_s^{\phi,r}|
+|F_s^{\mu}(X_s^r)v_s^{\phi,r}- F_s^{\mu}(X^r_s)v_s^{\phi}|
+|F_s^{\mu}(X^r_s)v_s^{\phi}- F_s^{\mu}(X_s)v_s^\phi|\\
&=:J_1+J_2+J_3.\end{align*}
By \eqref{FSM}, {\bf(H)}, Lemma \ref{um}, \eqref{u0} and \eqref{CPP},  we   find a constant $c_1>0$ such that
\beg{align*}
J_1 &\le \big|\nn(\theta_s^{\ll, \mu^r}-\theta_s^{\ll,\mu})(X_s^r)\E[D^L(B_s+b_s)(y, \L_{X_s^r})(X_s^r)(\nn\theta_s^{\ll,\mu})^{-1}(X_s^r)v_s^{\phi,r}]|_{y=X_s^r}\big|\\
&\leq c_1\|\phi\|_{L^2(\mu)}^2,\\
J_2&\leq c_1\E [|D^L (B_s+b_s)(y,\L_{X_s^r})(X_s^r)||v_s^{\phi,r}-v_s^\phi|]  \big|_{y=X_s^r}.
\end{align*}
 Moreover, similarly to \eqref{FSM'}, by {\bf(H)}, {\bf (C)}, \eqref{CPP} and \eqref{u0}, we find a   nonnegative random variables $\{h_{s,r}(\phi)\}_{s\in [0,T], r\in [0,1]}$ satisfying \eqref{FSM''} such that
\begin{align*}
&J_3\leq h_{s,r}(\phi) \|\phi\|_{L^2(\mu)}.
\end{align*}
So, there exists a constant $c_2>0$ such that
\beg{align*}&|F_s^{\mu^r}(X_s^r)v_s^{\phi,r}- F_s^{\mu}(X_s)v_s^\phi|\\
&\le c_2 \E [|D^L (B_s+b_s)(y,\L_{X_s^r})(X_s^r)||v_s^{\phi,r}-v_s^\phi|]  \big|_{y=X_s^r}\\
 &\qquad+c_2 (h_{s,r}(\phi)+\|\phi\|_{L^2(\mu)}) \|\phi\|_{L^2(\mu)},\ \ s\in [0,T], r\in [0,1].\end{align*}
Combining this with {\bf (H)}, \eqref{u0},  \eqref{gth-1}, \eqref{t-b-sig} and    \eqref{DXX01}, we find some constant $c_3>0$, a martingale $M_t^r$ with $\d\<M^r\>_t\le \d t$ and nonnegative random variables $\{h_{s,r}(\phi)\}_{s\in [0,T], r\in [0,1]}$ satisfying \eqref{FSM''} such that
\beq\label{SSS} \beg{split} &|v_t^{\phi,r}-v_t^\phi|\le c_3 |\phi(X_0)|(\|\phi\|_{L^2(\mu)}+[\nn\theta^{\lambda,\mu}_0](X_0+r\phi(X_0))-[\nn\theta^{\lambda,\mu}_0](X_0))\\
&+ c_3\int_0^t \big\{|v_s^{\phi,r}-v_s^\phi| +\E [|D^L (B_s+b_s)(y,\L_{X_s^r})(X_s^r)||v_s^{\phi,r}-v_s^\phi|]  \big|_{y=X_s^r}\big\}\d s\\
 &+ c_3\int_0^t \big\{h_{s,r}(\phi)(|v_s^\phi|+\|\phi\|_{L^2(\mu)})\big\}\d s \\
&+ c_3 \bigg|\int_0^t \big\{|v_s^{\phi,r}-v_s^\phi|+ h_{s,r}(\phi)|v_s^\phi|\big\} \d M_s^r\bigg|,\ \ t\in [0,T],r\in [0,1].\end{split}\end{equation}
By  BDG's inequality, H\"{o}lder's inequality, \eqref{CPP}, \eqref{u0} and \eqref{FSM''}, we find a constant $c_4>0$ and $\varepsilon(\phi)$ with $\lim_{\|\phi\|_{L^2(\mu)}\to 0}\varepsilon(\phi)=0$ such that
$$U_t:= \sup_{s\in [0,t]} |v_s^{\phi,r}-v_s^\phi|^{\frac{p}{p-1}},\ \ t\in [0,T]$$ satisfies
$$\E U_t \le \|\phi\|_{L^2(\mu)}^{\frac{p}{p-1}}\varepsilon(\phi) + c_4 \int_0^t\E U_s  \d s + \ff 1 2 \E U_t,\ \ t\in [0,T].$$
By Gronwall's lemma,  we obtain \eqref{nee} and the proof is completed.
\end{proof}

\end{document}